\DeclareMathOperator*{\trace}{Tr}
\DeclareMathOperator*{\argmin}{argmin}
 \newcommand{\Tr}{\text{Tr}}
\renewcommand{\Re}{{\mathbb{R}}}
\newcommand{\Ce}{\mathbb{C}} 
 \def\xx{{\boldsymbol{x}}}
 \def\B{{B}}
 \def\zz{{\boldsymbol{z}}} 
 \def\yy{{\boldsymbol{y}}}
\def\bb{{\boldsymbol{b}}}  
\def\aa{{\boldsymbol{a}}}  
 \DeclareMathOperator*{\rank}{rank}
\def\ee{{\boldsymbol{e}}}  
\def\subjto{{\mbox{subj. to}}}
\def\find{{\mbox{find }}}
\newcommand{\Rmnum}[1]{\MakeUppercase{\romannumeral #1}}
\renewcommand{\Re}{{\mathbb{R}}}
 \newtheorem{thm}{Theorem}
\newtheorem{df}[thm]{Definition}
\newtheorem{cor}[thm]{Corollary}
 \newtheorem{ex}[thm]{Example}
\author[$\dagger$,*]{Henrik~Ohlsson} 
\author[*]{Allen~Y.~Yang} 
\author[*]{Roy~Dong} 
\author[*]{S.~Shankar~Sastry} 
\affil[$\dagger$]{Division of
  Automatic Control, Department of Electrical Engineering, Link\"oping
  University, Sweden.}        
\affil[*]{Department of Electrical Engineering and Computer
  Sciences, University of California at Berkeley, CA, USA\\  \{ohlsson,yang,roydong,sastry\}@eecs.berkeley.edu.}
\begin{document}

%\begin{frontmatter}
\title{Compressive Phase Retrieval From Squared Output Measurements Via Semidefinite Programming} %\thanks{Department of Electrical Engineering and Computer
%  Sciences, University of California at Berkeley, CA, USA 
 % \{ohlsson,yang,roydong,sastry\}@eecs.berkeley.edu} }
\maketitle

\begin{abstract}                         
Given a linear system in a real or complex domain, linear regression aims to recover the model parameters from a set of observations.
Recent studies in compressive sensing have successfully shown that under certain conditions, a linear program, namely, $\ell_1$-minimization, guarantees recovery of
sparse parameter signals even when the system is underdetermined. In this paper, we consider a more challenging problem: when
the phase of the output measurements from a linear system is omitted. 
Using a lifting technique, we show that even though the phase information is missing, the sparse signal can be
recovered exactly by solving a simple semidefinite program when the sampling rate is sufficiently
high, albeit the exact solutions to both sparse signal recovery and phase retrieval are combinatorial. 
The results extend the type of applications that compressive sensing can be applied to those where only output magnitudes can be observed.
We demonstrate the accuracy of the algorithms through theoretical analysis, extensive simulations and a practical experiment.
\end{abstract}

%\end{frontmatter}

%%%%%%%%%%%%%%%%%%%%%%%%%%%%%%%%%%%%%%%%%%%%%%%%%%%%%%%%%%%%%%%%%%%%%%%%%%%%%%%%%%%%%%%%%%%%%%%%%%%%%%%%%%%%%%%%%%%%%%%%%%%%%%%%%%%%%%%%%%%%%%%%%%%%%%%%%%%%%%%%%%%%%%%%%%%%%%%%%%%%%%%%%%%%%%%%%%%%%%%%%%%%%%%%%%%%%%%%%%%%%%%%%%%%%%%%%%%%%%%%%%%%%%%%%

\section{Introduction}
Linear models, e.g. $\yy = A\xx$, are by far the most used and useful type of model. The
main reasons for this are their simplicity of use and identification. For the
identification, the least-squares (LS) estimate in a complex domain is computed by\footnote{Our derivation in this paper is primarily focused on complex signals, but the results should be easily extended to real domain signals.}
\begin{equation}\label{eq:ls}
\xx_{\text{ls}}=\argmin_\xx \|\yy - A\xx\|_2^2 \in \Ce^n,
\end{equation}
assuming the output $\yy \in \Ce^N$ and $A \in \Ce^{N\times n}$ are given.
Further, the LS problem has a unique solution if the system is full
rank and not underdetermined, i.e. $ N\geq n$.

Consider the alternative scenario when the system is underdetermined, i.e. $n>N$. The least squares solution is no longer unique in this case,
and additional knowledge has to be used to determine a unique model parameter.
Ridge regression or Tikhonov regression \citep{Hoerl:70}
is one of the traditional methods to apply in this case, which takes the form
\begin{equation}   
 \xx_{\text{r}}=\argmin_\xx  \frac{1}{2}\|\yy - A\xx\|^2_2 +\lambda \|\xx\|^2_2,
\end{equation}
where $\lambda>0$ is a scalar parameter that decides the trade off between fit in the first term and the $\ell_2$-norm of $\xx$ in the second term.

Thanks to the $\ell_2$-norm regularization, ridge regression is known to pick up solutions with small energy that satisfy the linear model.
In a more recent approach stemming from the LASSO \citep{Tibsharami:96} and compressive sensing (CS) \citep{Candes:06,Donoho:06},
another convex regularization criterion has been widely used to seek the \emph{sparsest} parameter vector, which takes the
form
\begin{equation}\label{eq:l1}
 \xx_{\ell_1} =\argmin_\xx  \frac{1}{2}\|\yy - A\xx\|^2_2 +\lambda \|\xx\|_1.
\end{equation}
Depending on the choice of the weight parameter $\lambda$, the program \eqref{eq:l1}
has been known as the LASSO by \cite{Tibsharami:96}, \emph{basis pursuit denoising} (BPDN) by \cite{Chen:98}, or $\ell_1$-minimization ($\ell_1$-min) by \cite{Candes:06}.
In recent years, several pioneering works have contributed to efficiently solving sparsity minimization problems
such as \citep{Tropp:04,BeckA2009,bruckstein:09}, especially when the system parameters and observations are in high-dimensional spaces.

% In this paper, we consider a more challenging problem than in an underdetermined system, only
% the squared output measurements are given
% \begin{equation}
% b_i = y_i^2 = | \langle \xx, \aa_i\rangle |^2, \quad i= 1, \dots, N,
% \end{equation}
% where $A^T = [\aa_1, \cdots, \aa_n] \in \Ce^{N\times n}$,
% $N<n$. \TODO{Candes' scheme finds unique solutions when $N>c_0 n \log
% n$, I would rather say that we handle that case}

In this paper, we consider a more challenging problem. In a linear model $\yy = A\xx$, rather than assuming that $\yy$ is given,
we will assume that only the squared magnitude of the output is observed:
\begin{equation}\label{eq:pr}
b_i =   |  y_i |^2= | \langle \xx, \aa_i\rangle |^2, \quad i= 1, \cdots, N,
\end{equation} where $A^H = [\aa_1, \cdots, \aa_N] \in \Ce^{n\times
  N}$, $\yy^T = [y_1, \cdots, y_N] \in \Ce^{1 \times
  N}$ and $A^H$ denotes  the Hermitian transpose of $A$.
This is clearly a more challenging problem since the phase of $\yy$ is
lost when only its (squared) magnitude is available. 
A classical example is that $\yy$ represents the Fourier transform of $\xx$, 
and that  only the Fourier transform modulus is observable. 
%the real measurements only collect the squared magnitude of the transform resul%t.
This scenario arises naturally in several practical applications such
as optics \citep{WaltherA1963,MillaneR1990}, coherent diffraction
imaging \citep{FienupJ1987}, and astronomical imaging
\citep{DaintyJ1987} and is known as the \emph{phase retrieval} problem.

%\TODO{For a Fourier basis, also translation, reflection, etc, but I
 % dont think we need to care. Let us just discuss the general case.}
We note that in general phase  cannot be uniquely
recovered regardless whether the linear model is overdetermined or not. A
simple example to see this, is if $\xx_0\in\Ce^n$ is a solution to $\yy =
A\xx$, then for any scalar $c\in\Ce$ on the unit circle $c\xx_0$ leads
to the same squared output $\bb$. 
%In the special case where the
%dictionary $A$ represents the unitary discrete Fourier transform (DFT)   
As mentioned in \citep{Candes:11b},
when the dictionary $A$ represents the unitary discrete Fourier
transform (DFT), the ambiguities may represent time-reversed solutions
or time-shifted solutions of the ground truth signal $\xx_0$. These
global ambiguities caused by losing the phase information are
considered acceptable in phase retrieval applications. From
now on, when we talk about the solution to the phase retrieval problem, it
is the solution up to a global phase ambiguity. Accordingly, a unique solution
is a solution unique up to a global phase.

Further note that since \eqref{eq:pr} is nonlinear in the unknown $\xx$,
$N \gg n$ measurements are in general needed for a unique
solution. When the number of measurements $N$ are fewer than necessary
for a unique solution, additional assumptions are needed to
select one of the solutions (just like in Tikhonov, Lasso and CS).

Finally, we note that the exact solution to either CS and phase retrieval is combinatorially expensive \citep{Chen:98,Candes:11}.
Therefore, the goal of this work is to answer the following question: \emph{Can we effectively recover 
a sparse parameter vector $\xx$ of a linear system up to a
global ambiguity using its squared magnitude output measurements via convex
programming?} The problem is referred as \emph{compressive phase retrieval} (CPR) \citep{MoravecM2007}.

The main contribution of the paper is a \emph{convex formulation} of the
sparse phase retrieval problem. Using a lifting technique, the NP-hard
problem is relaxed as a semidefinite program. We also derive bounds for
guaranteed recovery of the true signal and compare the performance of our CPR algorithm with
traditional CS %\citep{MoravecM2007} 
and PhaseLift \citep{Candes:11b}
algorithms through extensive experiments.
The results extend the type of applications that compressive sensing can be applied to; namely, applications
where only magnitudes can be observed.

\subsection{Background}
Our work is motivated by the $\ell_1$-min problem in CS and a recent PhaseLift technique in phase retrieval by \cite{Candes:11}. On one hand, the theory of CS and $\ell_1$-min has 
been one of the most visible research topics in recent years. There
are several comprehensive review papers that cover the literature of
CS and related optimization techniques in linear programming. The
reader is referred to the works of
\citep{candes:08,bruckstein:09,LorisI2009,YangA2010-ICIP}. On the other
hand, the fusion of phase retrieval and matrix completion is a novel
topic that has recently been studied in a selected few papers, such
as \citep{Candes:11,Candes:11b}. The fusion of phase retrieval and CS
was discussed in \citep{MoravecM2007}. In the rest of the section, we briefly review the phase retrieval literature and its recent connections with CS and matrix completion.

Phase retrieval has been a longstanding problem in optics and x-ray
crystallography since the 1970s \citep{KohlerD1972,GonsalvesR1976}. Early methods to recover the phase signal using Fourier transform mostly relied on additional information about the signal, such as band limitation, nonzero support, real-valuedness, and nonnegativity. The Gerchberg-Saxton algorithm was one of the popular algorithms that alternates between the Fourier and inverse Fourier transforms to obtain the phase estimate iteratively \citep{GerchbergR1972,FienupJ1982}. One can also utilize steepest-descent methods to minimize the squared estimation error in the Fourier domain \citep{FienupJ1982,MarchesiniS2007}. Common drawbacks of these iterative methods are that they may not converge to the global solution, and the rate of convergence is often slow. Alternatively, \cite{BalanR2006} have studied a frame-theoretical approach to phase retrieval, which necessarily relied on some special types of measurements.

More recently, phase retrieval has been framed as a low-rank matrix
completion problem in \citep{Chai:10,Candes:11b,Candes:11}. Given a system, a
lifting technique was used to approximate the linear model constraint
as a semidefinite program (SDP), which is similar to the objective function of the proposed method only without the sparsity constraint. The authors also derived the upper-bound for the sampling rate that guarantees exact recovery in the noise-free case and stable recovery in the noisy case.

We are aware of the work by \cite{MoravecM2007}, which has considered
compressive phase retrieval on a random Fourier transform model. 
Leveraging the sparsity constraint, the authors proved
that an upper-bound of $O(k^2\log(4n/k^2))$  random Fourier modulus
measurements to uniquely specify $k$-sparse
signals. \cite{MoravecM2007} also proposed a greedy compressive  phase
retrieval algorithm. Their solution largely follows the development of $\ell_1$-min in CS, and it
alternates between the domain of solutions that give rise to the same
squared output and the domain of an $\ell_1$-ball with a fixed
$\ell_1$-norm. However, the main limitation of the algorithm is
that it tries to solve a nonconvex optimization problem and that it assumes
the $\ell_1$-norm of the true signal is known. No guarantees for when
the algorithm recovers the true signal can therefore be given.

\section{CPR via SDP}
%When the number
%$N$ of squared magnitude output measurements is larger  than the
%number $n$ of unknowns, 
In the noise free case, the phase retrieval problem takes the form of the feasibility
problem:
\begin{equation}
\find \xx\quad \subjto\quad \bb = |A\xx|^2 = \{\aa_i^H \xx \xx^H \aa_i \}_{1\le i\le N},
\label{eq:phase-retrieval}
\end{equation}
where $\bb^T = [b_1, \cdots, b_N] \in \Re^{1 \times
  N}$. 
%In the noise free case, it suffices to pick out $n$ of the $N$ \TODO{CORRECT???}
%squared magnitude observations, assuming the corresponding elements in
%the dictionary $A$ are linearly independent and uniformly distributed
%on the unit sphere of $\Ce^n$ or $\Re^n$. 
This is a
combinatorial problem to solve: Even in the real domain with the sign
of the measurements $\{\alpha_i\}_{i=1}^N\subset \{-1, 1 \}$, one
would have to try out combinations of sign sequences until one that satisfies
\begin{equation}
\alpha_i \sqrt{b_i} = \aa_i^T\xx,\quad i=1,\cdots, N,
\end{equation}
for some $\xx\in \Re^n$ has been found. For any practical size of
data sets, this combinatorial problem is intractable.
%As we previously mentioned, in the case of CPR, we assume 

% The feasibility problem \eqref{eq:phase-retrieval}, is a nonlinear
% equation system and the notion of underdetermined and overdetermined
% equation systems used for linear equation systems does not apply. In fact, it has been
% showed that when  $\aa_i,i=1,\cdots,N$ is taken as the Fourier basis
% then in general $2n$ measurements are needed to guarantee a unique
% solution \cite{??}.  
% However, if the signal $\xx$ is a priori known to be sparse, considerably
% fewer measurements are needed \cite{??}.

%All signals that people find meaningful can be decomposed as a sparse
%basis function expansion  \cite{Hayes:2009}. A sequence of
%independent random %numbers is an example of a signal that can not be
%decomposed using a sparse %decomposition.

Since \eqref{eq:phase-retrieval} is nonlinear in the unknown $\xx$,
$N \gg n$ measurements are in general needed for a unique
solution. When the number of measurements $N$ are fewer than necessary
for a unique solution, additional assumptions are needed to
select one of the solutions. Motivated by compressive
sensing, we here choose to seek the sparsest solution of CPR satisfying
\eqref{eq:phase-retrieval} or, equivalent, the solution to 
%If the signal $\xx$ is known to be sparse we could aim to solve the following %problem
\begin{equation}
\min_{\xx} \|\xx\|_0, \quad \subjto\quad \bb = |A\xx|^2 = \{\aa_i^H \xx \xx^H \aa_i \}_{1\le i\le N}.
\label{eq:CPR-exact}
\end{equation}
As the counting norm $\|\cdot\|_0$ is not a convex function, following the $\ell_1$-norm relaxation in CS, \eqref{eq:CPR-exact} can be relaxed as
\begin{equation}
\min_{\xx} \|\xx\|_1, \quad \subjto\quad \bb = |A\xx|^2 = \{\aa_i^H \xx \xx^H \aa_i \}_{1\le i\le N}.
\label{eq:CPR-l1}
\end{equation}

Note that \eqref{eq:CPR-l1} is still not a linear program, as its equality constraint is not a linear equation.
In the literature, a lifting technique has been extensively used to reframe problems such as \eqref{eq:CPR-l1} to a standard form in semidefinite programming, such as in Sparse PCA \citep{Aspremont:07}. 

More specifically, given the ground truth signal $\xx_0\in\Ce^n$, let
$X_0\doteq \xx_0\xx_0^H\in \Ce^{n\times n}$ be an induced rank-1
semidefinite matrix. Then the compressive phase retrieval (CPR) problem can be cast as\footnote{In this paper, $\|X\|_1$ for a matrix $X$ denotes the entry-wise $\ell_1$-norm, and $\|X\|_2$ denotes the  Frobenius norm.}
\begin{eqnarray}
\begin{array}{rl}
\min_X& \|X\|_1\\
\subjto& b_i = \trace(\aa_i^HX\aa_i),\; i=1,\cdots,N, \\
& \rank(X) = 1, X \succeq 0.
\end{array}
\end{eqnarray}

This is of course still a non-convex problem due to the rank
constraint. The lifting approach addresses this issue by replacing
$\rank(X)$ with $\trace(X)$. For a semidefinite matrix, $\trace(X)$ is
equal to the sum of the eigenvalues of $X$ (or the $\ell_1$-norm on
a vector containing all eigenvalues of $X$). This leads to an SDP
\begin{eqnarray}
\begin{array}{rl}
\min_X& \trace(X)+ \lambda \|X\|_1\\
\subjto& b_i = \trace(\Phi_iX), \; i=1,\cdots,N, \\
& X \succeq 0,
\end{array}
\label{eq:noiseless-SDP}
\end{eqnarray}
where we further denote $\Phi_i\doteq \aa_i\aa_i^H\in\Ce^{n\times
  n}$  and where $\lambda > 0$ is a design parameter.  Finally, the
estimate of $\xx$ can be found by computing the rank-1 decomposition of $X$
via singular value decomposition. We will refere to the formulation \eqref{eq:noiseless-SDP} as
 compressive phase
retrieval via lifting (CPRL).

We compare \eqref{eq:noiseless-SDP} to a recent solution of PhaseLift by \cite{Chai:10,Candes:11}.
In \cite{Chai:10,Candes:11}, a similar objective function was employed for phase retrieval:
\begin{eqnarray}
\begin{array}{rl}
\min_X& \trace(X)\\
\subjto& b_i = \trace(\Phi_iX), \; i=1,\cdots,N, \\
& X \succeq 0,
\end{array}
\label{eq:phase-lift}
\end{eqnarray}
albeit the source signal was not assumed sparse.
Using the lifting technique to construct the SDP relaxation of the NP-hard phase retrieval problem, with high probability, 
the program \eqref{eq:phase-lift} recovers the exact solution (sparse or
dense) if the number of measurements $N$ is at least of the order of $O(n \log n)$. The region of success
is visualized in Figure \ref{fig:nNrel} as region {\Rmnum 1}. 

If $\xx$ is sufficiently sparse and random Fourier
 dictionaries are used for sampling, \cite{MoravecM2007} showed that 
 in general the signal is uniquely defined if the number of
 squared magnitude output
 measurements $\bb$ exceeds the order of $O(k^2\log(4 n/k^2))$. This lower bound for the region of success of CPR is illustrated by the dash line in Figure \ref{fig:nNrel}.

Finally, the motivation for introducing the $\ell_1$-norm regularization in \eqref{eq:noiseless-SDP} is to be able to solve the sparse phase retrieval
problem for $N$ smaller than what PhaseLift requires. However, one
will not be able to solve the compressive phase retrieval problem in
region {\Rmnum 3} below the dashed
curve. Therefore, our target problems lie in region {\Rmnum 2}.

\begin{figure}[th!]
\centering
\includegraphics[width = 0.8\columnwidth]{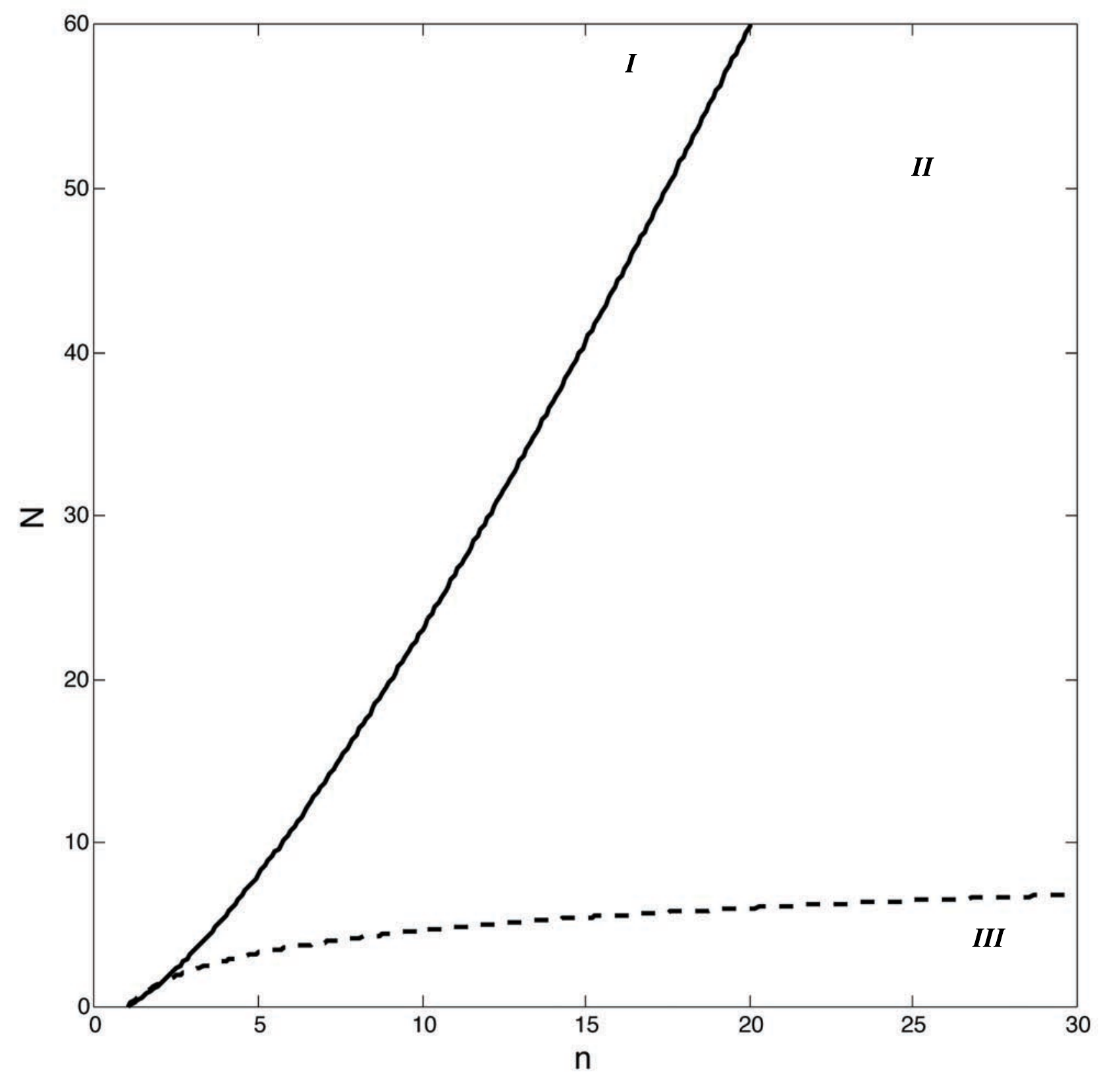}
\caption{An illustration of the regions of importance in solving the
  phase retrieval problem.
 While PhaseLift primarily targets problems in region {\Rmnum 1}, CPRL operates primarily in regions {\Rmnum 2} and {\Rmnum 3}.}
\label{fig:nNrel}
\end{figure}

\begin{ex}[Compressive Phase Retrieval]\label{ex:ex1}
In this example, we illustrate a simple CPR example,
where a 2-sparse complex signal $\xx_0\in\Ce^{64}$ is first
transformed by the Fourier transform $F\in \Ce^{64 \times 64}$
followed by random projections $R \in \Ce^{32\times 64}$ (generated by
sampling a unit complex Gaussian):
\begin{equation}
\bb=| R F \xx_0|^2.
\end{equation}

Given $\bb$, $F$, and $R$, we first apply PhaseLift  algorithm \cite{Candes:11} with $A=RF$ to
the $32$ squared observations $\bb$. The recovered dense signal is
shown in Figure~\ref{fig:firstex}. As seen in the figure, PhaseLift fails to identify the 2-sparse signal.

Next, we apply  CPRL \eqref{eq:noisy-SDP}, and the recovered sparse signal is also shown in
Figure~\ref{fig:firstex}. CPRL correctly identifies the two nonzero
elements in $\xx$.

\begin{figure}[th!]
\centering
\includegraphics[width = 0.8\columnwidth]{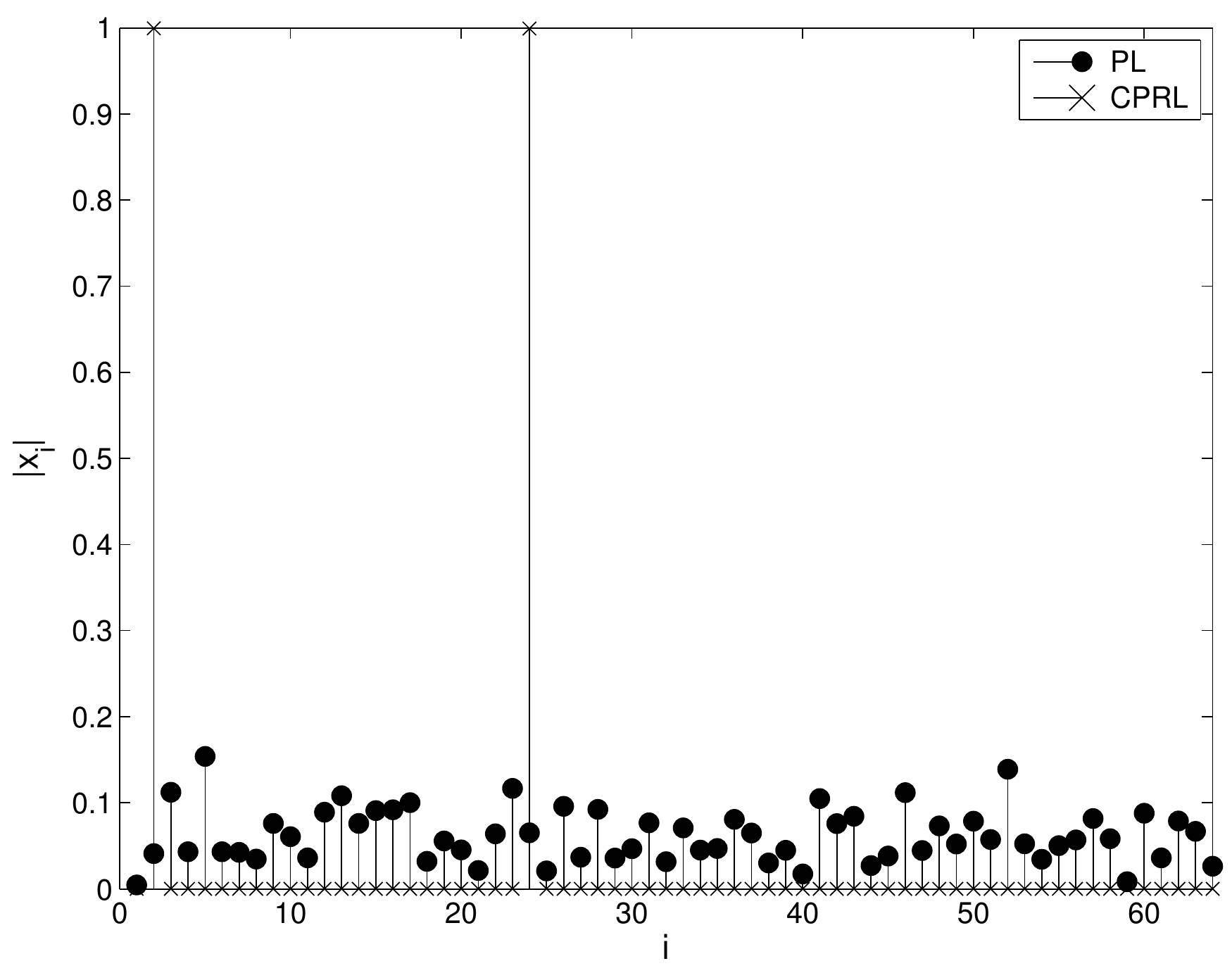}
\caption{The magnitude of the estimated signal provided by CPRL and PhaseLift
  (PL). CPRL correctly identifies elements 2 and 24 to be nonzero while
  PhaseLift provides a dense estimate. It is also verified that the estimate from CPRL, after a global phase shift,
  is approximately equal the true $\xx_0$.}
\label{fig:firstex}
\end{figure}

\end{ex}

\section{Stable Numerical Solutions for Noisy Data}
\label{sec:num}
In this section, we consider the case that the measurements are contaminated by data noise. In a linear model, typically bounded random noise affects the output of the system as $\yy = A\xx + \ee$,
where $\ee\in\Ce^N$ is a noise term with bounded $\ell_2$-norm: $\|\ee\|_2\le \epsilon$. However, in phase retrieval, we follow closely a more special noise model used in \cite{Candes:11}:
\begin{equation}
b_i = |\langle \xx, \aa_i\rangle|^2 + e_i.
\label{eq:noise-model}
\end{equation}
This nonstandard model avoids the need to calculate the squared
magnitude output
$|\yy|^2$ with the added noise term. More importantly, in practical phase
retrieval applications, measurement noise is introduced when the
squared magnitudes or intensities of the linear system are measured,
not on $\yy$ itself (\cite{Candes:11}).

Accordingly, we denote a linear operator $\B$ of $X$ as
\begin{equation}
\B: X\in \Ce^{n\times n} \mapsto \{\trace(\Phi_i X) \}_{1\le i \le N}\in\Re^{N},
\end{equation}
which measures the noise-free squared output. Then the approximate CPR problem with bounded  $\ell_2$ error model \eqref{eq:noise-model} can be solved by the following SDP program:
\begin{equation}
\begin{array}{rl}
\min& \trace(X)+ \lambda \|X\|_1\\
\subjto& \|\B(X) - \bb \|_2 \le \epsilon, \\
& X \succeq 0.
\end{array}
\label{eq:noisy-SDP}
\end{equation}
 The estimate of $\xx$, just as in noise free case, can finally be
 found by computing the rank-1 decomposition of $X$ via singular value
 decomposition. We refer to the method as approximate CPRL.
Due to the machine rounding error, in general a nonzero $\epsilon$
should be always assumed in the objective \eqref{eq:noisy-SDP} and its
termination condition during the optimization.

We should further discuss several numerical issues in the implementation of the SDP program. The constrained CPRL formulation \eqref{eq:noisy-SDP} can be rewritten as an unconstrained objective function:
\begin{equation}
\min_{X\succeq 0} \trace(X)+ \lambda \|X\|_1 + \frac{\mu}{2}\|\B(X) - \bb \|_2^2,
\label{eq:unconstrained}
\end{equation}
where $\lambda>0$ and $\mu>0$ are two penalty parameters.

In \eqref{eq:unconstrained}, due to the lifting process, the rank-1
condition of $X$ is approximated by its trace function $Tr(X)$. 
%Therefore, first, we consider under which condition the optimal
%solution of $X$ retains the rank-1 condition as an outerproduct $X
%=\xx \xx^H$. 
In \cite{Candes:11}, the authors considered phase retrieval of generic
(dense) signal $\xx$. They proved that if the number of measurements
obeys $N\ge c n\log n$ for a sufficiently large constant $c$, with
high probability, minimizing \eqref{eq:unconstrained} without the
sparsity constraint (i.e. $\lambda=0$) recovers a unique rank-1
solution obeying $X^* = \xx \xx^H$.  See also \citet{Recht:10}.

In Section \ref{sec:experiment}, we will show that using either random Fourier dictionaries or more general random projections, in practice, one needs much fewer measurements to exactly recover sparse signals if the measurements are noise free. Nevertheless, in the presence of noise, the recovered lifted matrix $X$ may not be exactly rank-1. In this case, one can simply use its rank-1 approximation corresponding to the largest singular value of $X$.

We also note that in \eqref{eq:unconstrained}, there are two main
parameters $\lambda$ and $\mu$ that can be defined by the
user. Typically $\mu$ is chosen depending on the level of noise that
affects the measurements $\bb$. For $\lambda$ associated with the
sparsity penalty $\|X\|_1$, one can adopt a \emph{warm start} strategy
to determine its value iteratively. The strategy has been widely used
in other sparse optimization, such as in $\ell_1$-min
\citep{YangA2010-ICIP}. More specifically, the objective is solved
iteratively with respect to a sequence of monotonically decreasing
$\lambda$, and each iteration is initialized using the optimization
results from the previous iteration. The procedure continues until a rank 1
solution has been found. When $\lambda$ is large, the sparsity constraint outweighs the trace constraint and the estimation error constraint, and vice versa.

\begin{ex}[Noisy Compressive Phase Retrieval]
Let us revisit Example~\ref{ex:ex1} but now assume that the
measurements are contaminated by noise. Using exactly the same data as
in Example~\ref{ex:ex1} but adding uniformly
distributed measurement noise between $-1$ and $1$, CPRL was able to
recover the 2  nonzero elements. PhaseLift, just as in
Example~\ref{ex:ex1} gave a dense estimate of $\xx$.
\end{ex}
%%%%%%%%%%%%%%%%%%%%%%%%%%%%%%%%%%%%%%%%%%%%%%%%%%%%%%%%%%%%%%%%%
%%%%%%%%%%%%%%%%%%%%%%%%%%%%%%%%%%%%%%%%%%%%%%%%%%%%%%%%%%%%%%%%%
%%%%%%%%%%%%%%%%%%%%%%%%%%%%%%%%%%%%%%%%%%%%%%%%%%%%%%%%%%%%%%%%%

\section{Computational Aspects}
In this section we discuss computational issus of the proposed SDP
formulation, algorithms for solving the SDP and to efficient
approximative solution algorithms.

\subsection{A Greedy Algorithm}

Since \eqref{eq:noiseless-SDP} is an SDP, it can be solved by standard software, such as CVX \citep{CVX1}.
However, it is well known that the standard toolboxes suffer when the dimension of $X$
is large. We therefore propose a greedy approximate algorithm tailored to
solve  \eqref{eq:noiseless-SDP}. 
%Under mild conditions, the algorithm GCPRL can accurately solve large
%problems of the form \eqref{eq:noiseless-SDP}. 
If the number of nonzero elements in $x$ is expected to be low, the
following algorithm may be suitable and less computationally heavy
compare to approaching the original SDP:
% \begin{enumerate}
% \item Set $\mathcal{Q}=\{1,\cdots,N\}$ and pick $\epsilon$ to be a
%   small positive real scalar.
% \item For $i=1,\cdots,N$, solve
% \begin{equation}\begin{aligned}
% \min_X \trace(X) &\\ \subjto  \; \bb=&B(X) \\ X \succeq & 0\\
% X(j,:)=&X(:,j)=0, \; j\in \{1,\cdots,i-1,i+1,\cdots, N\} \bigcap
% \mathcal{Q}.
% \end{aligned}\end{equation}
% Let $W_i$ denote the objective value and $X_k$ the obtained solution.
% \item Let $k$ be such that $W_k\leq W_i, i=1,\cdots,N$. 
% \item If $W_k<\epsilon$ stop and return $X_k$. Otherwise, set
%   $\mathcal{Q}$ to $\mathcal{Q} \backslash k$ and return to step 2.
% \end{enumerate}

\begin{algorithm}[H]
%\dontprintsemicolon
%\KwIn{$\{\aa_i,b_i\}_{i=1}^N$, $\gamma >0$ and  $\epsilon >0$.}
%\KwOut{$X_p$}
Set $\mathcal{I} = \emptyset$ and let $\gamma >0$, $\epsilon >0$.\\
\Repeat{$W_p < \epsilon$}{
\For{$k = 1,\cdots,N,$}{
Set $\mathcal{I}_k =
    \mathcal{I} \bigcup \{k\}$ and solve $X_k \doteq \arg\min_{X \succeq 0}  
 \trace (X^{(\mathcal{I}_k)}) + \gamma \sum_{i=1}^N (b_i - \trace (
  \aa_i^{(\mathcal{I}_k )} {\aa_i^{(\mathcal{I}_k)}}^H X^{(\mathcal{I}_k)}))^2.$\\
  Let $W_k$ denote the corresponding objective value.
}
 Let $p$ be such that $W_p  \leq W_k, k=1,\cdots,N$.  Set
    $\mathcal{I} = \mathcal{I}  \bigcup \{p\}$ and $X = X_p$. }
\caption{Greedy Compressive Phase Retrieval via Lifting (GCPRL)}
\end{algorithm}

\begin{ex}[GCPRL ability to solve the CPR problem]

 To demonstrate the effectiveness of  GCPRL let us consider a numerical
 example.  Let  the true $\xx_0 \in  \Ce^n$ be a $k$-sparse signal,
 let the nonzero elements be randomly chosen and
 their values randomly distributed on the complex unit circle. Let $A\in
 \Ce^{N \times n}$ be  generated by sampling from a complex unit Gaussian distribution.

If we fix $n/N=2$, that is, twice as many unknowns as measurements,
and apply GCPRL for different values of $n,\,N$ and $k$ we obtain the
computational times visualized in the left plot of  Figure~\ref{fig:exp}. In all
simulations $\gamma =10$ and $\epsilon =10^{-3} $ are used in
GCPRL. The true sparsity pattern was always recovered. Since GCPRL can
be executed in parallel, the simulation
times can be divided by the number of cores used (the average run time in Figure~\ref{fig:exp}
is computed on a standard laptop running Matlab, 2 cores,
and using CVX to solve the low dimensional SDP of GCPRL). 
The algorithm is several magnitudes faster than the standard interior-point methods used in CVX.

\begin{figure}[h!]
\centering
\includegraphics[height=0.7\columnwidth]{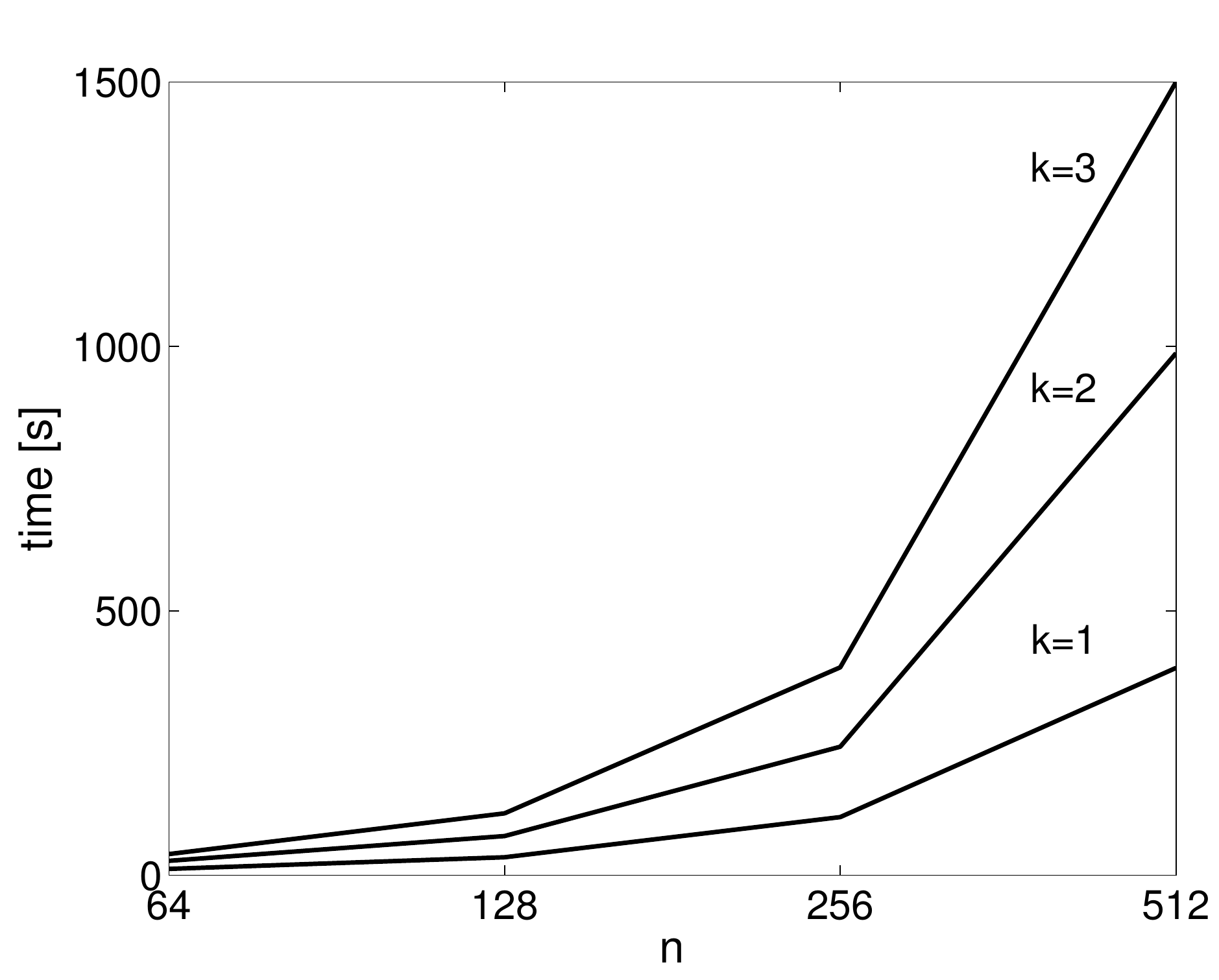}
\label{fig:exp}
\caption{Average run time of GCPRL in Matlab CVX environment. }
\end{figure}
\end{ex}

%%%%%%%%%%%%%%%%%%%%%%%%%%%%%%%%%%%%%%%%%%%%%%%%%%%%%%%%%%%%%%%%%
%%%%%%%%%%%%%%%%%%%%%%%%%%%%%%%%%%%%%%%%%%%%%%%%%%%%%%%%%%%%%%%%%
%%%%%%%%%%%%%%%%%%%%%%%%%%%%%%%%%%%%%%%%%%%%%%%%%%%%%%%%%%%%%%%%%

\section{The Dual}

CPRL takes the form:
\begin{eqnarray}
\begin{array}{rl}
\min_X  & \Tr(X) + \lambda \| X \|_1 \\
\subjto & b_i = \Tr( \Phi_i X); i = 1,\cdots,N, \\
& X \succeq 0.
\end{array}
\end{eqnarray}
If we define $h(X)$ to be an $N$-dimensional vector such that our constraints are $h(X) = 0$, then we can equivalently write:
\begin{eqnarray}
\begin{array}{rl}
\min_X \max_{\mu, Y, Z = Z^H} & \Tr(X) + \Tr(ZX) + \mu^T h(X) - \Tr(YX) \\
\subjto & Y \succeq 0, \\
& \|Z\|_{\infty} \leq \lambda.
\end{array}
\end{eqnarray}
Then the dual becomes:
\begin{eqnarray}
\begin{array}{rl}
max_{\mu, Z = Z^H} & \mu^T b \\
\subjto & \|Z\|_{\infty} \leq \lambda. \\
& Y := I + Z - \sum_{i=1}^N \mu_i \Phi_i \succeq 0.
\end{array}
\end{eqnarray}

%%%%%%%%%%%%%%%%%%%%%%%%%%%%%%%%%%
\section{Analysis}

This section contains various analysis results. The analysis follows
that of CS and have been inspired by derivations given in
\citep{Candes:11,Candes:06,Donoho:06,Candes_2008,berinde:08,bruckstein:09}. The analysis is
divided into four subsections. The three first subsections give results based on
RIP, RIP-1 and mutual coherency respectively. The last subsection
focuses on the use of Fourier dictionaries.

%%%%%%%%%%%%%%%%%%%%%%%%%%%%%%%%%%%%%%%%%%%%%%%%%%%%%%%%%%%%%%%%%
%%%%%%%%%%%%%%%%%%%%%%%%%%%%%%%%%%%%%%%%%%%%%%%%%%%%%%%%%%%%%%%%%
%%%%%%%%%%%%%%%%%%%%%%%%%%%%%%%%%%%%%%%%%%%%%%%%%%%%%%%%%%%%%%%%%

\subsection{Analysis Using RIP}
In order to state some theoretical properties, we need a generalization
 of the restricted isometry property (RIP). 
\begin{df}[RIP]\label{def:RIP}
We will say that a linear operator $\B(\cdot) $  is $(\epsilon,
k)$-RIP if for all $X\neq 0$ s.t. $\|X\|_0 \leq k$ we have 
\begin{equation}\label{eq:RIP}
\left | \frac{\| \B(X) \|_2^2}{\| X \|_2^2} -1\right|<\epsilon.
\end{equation}

\end{df}
We can now state the following theorem:

%%%%%%%%%%%%%%%%%%%%%%%%%%%%%%%%%%%%%%%%%%%%%%%%%%%%%%%%%%%%%%%%%

\begin{thm}[Recoverability/Uniqueness]\label{thm:rec}
%Let $\epsilon<1$ and let $\B(\cdot)$ be a $(\epsilon,2k)$-RIP linear operator. %Let $X^*$
%be a matrix s.t. $\|X^*\|_0\leq k$ and 
Let $\B(\cdot)$  be a $(\epsilon,2 \|X^*\|_0)$-RIP linear
operator with  $\epsilon<1$ and let $\bar \xx$ be the sparsest solution to \eqref{eq:pr}. 
If $X^*$ satisfies
\begin{equation}\label{eq:const}
\begin{aligned}
{\bf b}=& \B( X^*), \\ X^* \succeq &0,\\
\rank\{X^*\}=&1,
\end{aligned}
\end{equation}
then  $X^*$ is unique and  $X^*= \bar \xx \bar \xx^H$.
%is recovered by
%\begin{equation}\label{eq:CPR1}
%\begin{aligned}
%\min_X  \|X\|_0&\\
%\st \quad {\bf b}=& \B(X),
%\\ X \succeq &0,\\
%\rank\{X\}=&1.
%\end{aligned}\end{equation}
\end{thm}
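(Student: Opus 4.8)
The plan is to mimic the classical compressed-sensing uniqueness argument --- an $(\epsilon,2k)$-RIP map has at most one $k$-sparse preimage --- but lifted to the matrix domain. First I would observe that, because $X^*$ is rank-$1$ and positive semidefinite, it factors as $X^* = \xx^*(\xx^*)^H$, and the constraint $\bb = \B(X^*)$ reads $b_i = \trace(\Phi_i X^*) = \aa_i^H \xx^*(\xx^*)^H \aa_i = |\langle \xx^*, \aa_i\rangle|^2$; hence $\xx^*$ solves the phase-retrieval system \eqref{eq:pr}. In the same way $\bar X \doteq \bar\xx\bar\xx^H$ is rank-$1$, PSD, and satisfies $\B(\bar X) = \bb$, so it is a legitimate competing feasible point of \eqref{eq:const}, which makes the comparison below meaningful.

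The core step is to study the difference $D \doteq X^* - \bar X$. By linearity $\B(D) = \B(X^*) - \B(\bar X) = \bb - \bb = 0$. To invoke the RIP I must control $\|D\|_0$. Here I would use the bookkeeping $\|X^*\|_0 = \|\xx^*\|_0^2$ and $\|\bar X\|_0 = \|\bar\xx\|_0^2$, which holds because the $(i,j)$ entry of an outer product $\xx\xx^H$ vanishes exactly when $x_i = 0$ or $x_j = 0$, so no additive cancellation can shrink the support. Since $\bar\xx$ is by hypothesis the sparsest solution of \eqref{eq:pr} and $\xx^*$ is one such solution, $\|\bar\xx\|_0 \le \|\xx^*\|_0$, hence $\|\bar X\|_0 \le \|X^*\|_0$. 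Because $\mathrm{supp}(D)$ lies in the union of the two supports, $\|D\|_0 \le \|X^*\|_0 + \|\bar X\|_0 \le 2\|X^*\|_0$, matching exactly the RIP order assumed.

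I would then finish by contradiction. Suppose $X^* \neq \bar X$, so $D \neq 0$ with $\|D\|_0 \le 2\|X^*\|_0$. The $(\epsilon, 2\|X^*\|_0)$-RIP inequality \eqref{eq:RIP} yields $\|\B(D)\|_2^2 > (1-\epsilon)\|D\|_2^2$, and since $\epsilon < 1$ and $\|D\|_2 > 0$ the right-hand side is strictly positive, forcing $\B(D) \neq 0$ --- contradicting $\B(D) = 0$. Therefore $D = 0$, i.e. $X^* = \bar X = \bar\xx\bar\xx^H$. As this comparison applies verbatim to \emph{any} rank-$1$ PSD matrix satisfying \eqref{eq:const}, all such matrices coincide with $\bar\xx\bar\xx^H$, giving both the identity and uniqueness.

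The step I expect to require the most care is the sparsity accounting: translating the hypothesis that $\bar\xx$ is the sparsest \emph{vector} solution into the matrix-level bound $\|D\|_0 \le 2\|X^*\|_0$ that lines up with the RIP order, via the identity $\|\xx\xx^H\|_0 = \|\xx\|_0^2$. Everything downstream is the standard RIP non-vanishing argument, and a minor but worthwhile point to state explicitly is that $\bar X$ is genuinely feasible for \eqref{eq:const} (rank one, PSD, correct measurements) so that the comparison is valid.
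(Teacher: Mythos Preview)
Your proposal is correct and follows essentially the same argument as the paper: assume $X^*\neq\bar\xx\bar\xx^H$, bound $\|X^*-\bar\xx\bar\xx^H\|_0\le 2\|X^*\|_0$ using that $\bar\xx$ is the sparsest vector solution, and derive a contradiction from the RIP applied to a nonzero matrix in the kernel of $\B$. You supply more detail than the paper (explicitly justifying feasibility of $\bar X$ and the identity $\|\xx\xx^H\|_0=\|\xx\|_0^2$), but the structure and key idea are identical.
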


%%%%%%%%%%%%%%%%%%%%%%%%%%%%%%%%%%%%%%%%%%%%%%%%%%%%%%%%%%%%%%%%%
%%%%%%%%%%%%%%%%%%%%%%%%%%%%%%%%%%%%%%%%%%%%%%%%%%%%%%%%%%%%%%%%%
%%%%%%%%%%%%%%%%%%%%%%%%%%%%%%%%%%%%%%%%%%%%%%%%%%%%%%%%%%%%%%%%%

\begin{proof}[Proof of Theorem \ref{thm:rec}]
Assume the contrary i.e., $X^* \neq \bar \xx \bar \xx^H$.
% that \eqref{eq:CPR1} gives a different
%solution than $X^*$, namely $\bar X$. 
It is clear that $\|\bar \xx \bar \xx^H \|_0 \leq \|X^*\|_0$ and
hence  $\|\bar \xx \bar \xx^H - X^* \|_0 \leq 2 \|X^*\|_0$. If we now apply the RIP
inequality \eqref{eq:RIP} on $\bar \xx \bar \xx^H -
X^*$ and use that $\B(\bar \xx \bar \xx^H -X^*)=0$ we are led to the contradiction
$1<\epsilon$. We therefore conclude that $X^*$ is unique and $X^*=\bar \xx \bar \xx^H$.
\end{proof} 
%%%%%%%%%%%%%%%%%%%%%%%%%%%%%%%%%%%%%%%%%%%%%%%%%%%%%%%%%%%%%%%%%
%%%%%%%%%%%%%%%%%%%%%%%%%%%%%%%%%%%%%%%%%%%%%%%%%%%%%%%%%%%%%%%%%
%%%%%%%%%%%%%%%%%%%%%%%%%%%%%%%%%%%%%%%%%%%%%%%%%%%%%%%%%%%%%%%%%
%Now, since the zero-norm make the optimization problem combinatorial
%we would rather replace the zero norm by the $\ell_1$-norm. We can state the
%following result: 
We can also give a bound on the sparsity of $\bar \xx$:
\begin{thm}[Bound on $\|\bar \xx \bar \xx^H\|_0$ from above]\label{thm:Phaseliftrel1}
Let $\bar \xx$ be the sparsest solution to \eqref{eq:pr}  and let $\tilde X $ be
the solution of  CPRL \eqref{eq:noiseless-SDP}.  If $\tilde X$ has
rank 1 then  $\| \tilde X\|_0\geq \| \bar \xx \bar \xx^H \|_0$.

\end{thm}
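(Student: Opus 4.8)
The plan is to exploit the rank-one structure of $\tilde X$ to extract a feasible point for the original phase retrieval problem \eqref{eq:pr}, and then to relate the number of nonzero entries of a rank-one matrix to the number of nonzero entries of its generating vector.

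First I would use that $\tilde X$ is positive semidefinite and of rank one to factor it as $\tilde X = \tilde\xx \tilde\xx^H$ for some $\tilde\xx \in \Ce^n$. The equality constraints of CPRL then read $b_i = \trace(\Phi_i \tilde X) = \trace(\aa_i \aa_i^H \tilde\xx \tilde\xx^H) = |\aa_i^H \tilde\xx|^2$ for each $i$, which is precisely the statement that $\tilde\xx$ satisfies the phase retrieval equations \eqref{eq:pr}. Hence $\tilde\xx$ is a feasible solution of \eqref{eq:pr}, and since $\bar\xx$ is by assumption the sparsest such solution, we obtain $\|\bar\xx\|_0 \le \|\tilde\xx\|_0$.

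The second ingredient is the elementary identity $\|\xx\xx^H\|_0 = \|\xx\|_0^2$: the $(j,k)$ entry of the outer product $\xx\xx^H$ is $x_j \overline{x_k}$, which is nonzero exactly when both $x_j$ and $x_k$ are nonzero, so the number of nonzero entries of the matrix is the square of the number of nonzero entries of $\xx$. Applying this to both $\bar\xx$ and $\tilde\xx$ and combining with the sparsity inequality above yields $\|\bar\xx \bar\xx^H\|_0 = \|\bar\xx\|_0^2 \le \|\tilde\xx\|_0^2 = \|\tilde\xx \tilde\xx^H\|_0 = \|\tilde X\|_0$, which is the claimed bound.

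I expect no serious obstacle here; the argument is essentially a bookkeeping exercise, and the only substantive idea is recognizing that the rank-one hypothesis is exactly what allows one to pass from the lifted matrix back to a genuine vector solution of the nonconvex problem. The single point requiring a little care is that the factorization $\tilde X = \tilde\xx \tilde\xx^H$ is determined only up to a global unit-modulus phase; but since both the support and the nonzero count of a vector, and hence of its outer product, are invariant under multiplication by such a phase, this global ambiguity does not affect any of the $\ell_0$ counts involved.
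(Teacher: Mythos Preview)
Your argument is correct and is essentially the same as the paper's: both use the rank-one hypothesis to factor $\tilde X=\tilde\xx\tilde\xx^H$, observe that $\tilde\xx$ is feasible for \eqref{eq:pr}, and compare sparsities. The paper phrases this as a short contradiction and leaves the identity $\|\xx\xx^H\|_0=\|\xx\|_0^2$ and the factorization step implicit, whereas you argue directly and spell these out; the content is identical.
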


%%%%%%%%%%%%%%%%%%%%%%%%%%%%%%%%%%%%%%%%%%%%%%%%%%%%%%%%%%%%%%%%%
%%%%%%%%%%%%%%%%%%%%%%%%%%%%%%%%%%%%%%%%%%%%%%%%%%%%%%%%%%%%%%%%%
%%%%%%%%%%%%%%%%%%%%%%%%%%%%%%%%%%%%%%%%%%%%%%%%%%%%%%%%%%%%%%%%%

\begin{proof}[Proof of Theorem~\ref{thm:Phaseliftrel1}]
Let  $\tilde X $ be a rank-1  solution of  CPRL \eqref{eq:noiseless-SDP}.
By contradiction, assume $\|\tilde X\|_0  < \| \bar \xx \bar \xx^H \|_0$. Since $\tilde X$ satisfies the constraints of  \eqref{eq:pr}, it
 must give a lower objective value than $\bar \xx \bar \xx^H$ in
  \eqref{eq:pr} . This is a contradiction since $\bar \xx \bar \xx^H$ was assumed to be
 the solution of  \eqref{eq:pr}. Hence we must have that $\|\tilde
 X\|_0 \geq \|
 \bar \xx \bar \xx^H \|_0$.
\end{proof}
%%%%%%%%%%%%%%%%%%%%%%%%%%%

\begin{cor}[Guaranteed Recovery Using RIP]\label{thm:guartee1}
Let $\bar \xx$ be the sparsest solution to \eqref{eq:pr}. 
 The solution of CPRL  \eqref{eq:noiseless-SDP}, $\tilde X$, is equal to  $\bar
\xx \bar \xx^H$ if  it
has rank 1 and
$B $ is ($\epsilon, 2\|\tilde X\|_0$)-RIP with $\epsilon<1$.
\end{cor}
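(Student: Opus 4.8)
The plan is to obtain the corollary as an immediate specialization of Theorem~\ref{thm:rec}, taking the candidate matrix $X^*$ in that theorem to be the CPRL solution $\tilde X$ itself. The whole argument then reduces to verifying that, under the stated hypotheses, $\tilde X$ satisfies every premise of Theorem~\ref{thm:rec}.

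First I would record that, being the solution of \eqref{eq:noiseless-SDP}, $\tilde X$ is in particular feasible for that program, so it satisfies $b_i = \trace(\Phi_i \tilde X)$ for $i = 1,\dots,N$ together with $\tilde X \succeq 0$. By the definition of the operator $\B$, the equality constraints are exactly $\bb = \B(\tilde X)$. Combined with the standing assumption that $\rank(\tilde X) = 1$, this shows that $\tilde X$ fulfills all three conditions of \eqref{eq:const} with the choice $X^* = \tilde X$. Next I would match the RIP hypotheses: Theorem~\ref{thm:rec} requires $\B$ to be $(\epsilon, 2\|X^*\|_0)$-RIP with $\epsilon<1$, and since $X^* = \tilde X$, the required order $2\|X^*\|_0$ is precisely $2\|\tilde X\|_0$, which is exactly what the corollary assumes. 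With both the feasibility/rank conditions and the RIP condition in place, Theorem~\ref{thm:rec} applies verbatim and yields $\tilde X = \bar \xx \bar \xx^H$ (and moreover that this rank-1 feasible point is unique).

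The argument carries no substantial obstacle beyond this bookkeeping; the single point deserving care is that the RIP order in the corollary is expressed through the \emph{computed} quantity $\|\tilde X\|_0$ rather than through the unknown $\|\bar \xx \bar \xx^H\|_0$. Setting $X^* = \tilde X$ makes the two orders coincide, so no appeal to the sparsity comparison of Theorem~\ref{thm:Phaseliftrel1} is required; that result would become relevant only if one instead wished to phrase the RIP requirement in terms of the ground-truth support $\|\bar \xx \bar \xx^H\|_0$, and would then have to be combined with monotonicity of the RIP order in $k$.
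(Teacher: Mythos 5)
Your proposal is correct and is essentially identical to the paper's own proof, which likewise just observes that $\tilde X$ satisfies all the hypotheses placed on $X^*$ in Theorem~\ref{thm:rec} and applies that theorem with $X^* = \tilde X$. Your more explicit verification of feasibility and of the matching RIP order is just a spelled-out version of the paper's one-line argument.
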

\begin{proof}[Proof of Corollary~\ref{thm:guartee1}]
This follows trivially from Theorem \ref{thm:rec} by realizing that
$\tilde X$ satisfy all properties of $X^*$.

% Assume the contrary i.e., that \eqref{eq:noiseless-SDP} gives a different
% solution than  $\bar \xx \bar \xx^H$. It is clear
% that $\|\bar \xx \bar \xx^H \|_0 \leq \| \tilde X\|_0$ and
% hence  $\|\tilde X - \bar \xx \bar \xx^H \|_0  \leq 2\| \tilde X\|_0$. If we now apply the RIP
% inequality \eqref{eq:RIP} on $\tilde X -
% \bar \xx \bar \xx^H$ and use that $\B(\tilde X -\bar \xx \bar \xx^H)=0$ we are led to the contradiction
% $1<\epsilon$. We therefore conclude that $\bar \xx \bar \xx^H=\tilde X$.
\end{proof}

%%%%%%%%%%%%%%%%%%

If $\bar \xx \bar \xx^H=\tilde X$ can not be guaranteed, the following
bound could come useful:
\begin{thm}[Bound on $\| X^* -\tilde X \|_2$]\label{thm:CPR}
Let $\epsilon < \frac{1}{1+\sqrt{2}}$ and assume $\B(\cdot)$ to be a $(\epsilon,
2k)$-RIP linear operator. Let $X^*$  be any matrix (sparse or dense)
satisfying 
\begin{equation}\label{eq:const}
\begin{aligned}
{\bf b}=& \B( X^*), \\ X^* \succeq &0,\\
\rank\{X^*\}=&1,
\end{aligned}
\end{equation}
let $\tilde X$ be the CPRL solution,  \eqref{eq:noiseless-SDP},
and form $X_s$ from $X^*$ by setting all but the $k$ largest elements to zero, i.e.,
\begin{equation}
X_s=\argmin_{X:\|X\|_0\leq k} \|X^*-X\|_1.
\end{equation}
Then,
\begin{align}
\| \tilde X-X^* \|_2\leq &\frac{2}{(1-\rho) \sqrt{k} } \|X^*
-X_s\|_1 \nonumber \\ +&(2(1-\rho)^{-1}  +k^{-1/2}) \frac{1}{\lambda} (\trace X^* -\trace \tilde
 X)
\end{align}
with $\rho = \sqrt{2} \epsilon/(1-\epsilon)$. 
\end{thm}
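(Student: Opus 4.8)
The plan is to follow the RIP-based stable-recovery argument of compressed sensing (in the spirit of \citep{Candes_2008}), transcribed to the lifted matrix variable; the only genuinely new ingredient is the bookkeeping for the trace term in the CPRL objective. Write $H \doteq \tilde X - X^*$ for the error matrix. Since both $X^*$ and $\tilde X$ are feasible for \eqref{eq:noiseless-SDP}, we have $\B(\tilde X) = \B(X^*) = {\bf b}$ and hence $\B(H) = 0$. Throughout, the entrywise $\ell_1$, Frobenius, and $\ell_0$ norms let us treat $X$ exactly like a vectorized signal, so the RIP machinery applies verbatim; note that the PSD and rank-one structure of $X^*$ play no role in this particular bound, only its feasibility.

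First I would extract a cone condition from optimality. Let $T_0$ index the $k$ largest-magnitude entries of $X^*$, so that $X_s = X^*_{T_0}$ and $X^* - X_s = X^*_{T_0^c}$. Because $\tilde X$ minimizes $\trace(X) + \lambda\|X\|_1$ while $X^*$ is feasible,
\[
\trace(\tilde X) + \lambda\|\tilde X\|_1 \le \trace(X^*) + \lambda\|X^*\|_1 .
\]
Writing $\tilde X = X^* + H$, splitting the $\ell_1$ norm over $T_0$ and $T_0^c$, and applying the (reverse) triangle inequality on each piece yields the cone bound
\[
\|H_{T_0^c}\|_1 \le \|H_{T_0}\|_1 + 2\|X^* - X_s\|_1 + \tfrac{1}{\lambda}\big(\trace X^* - \trace \tilde X\big).
\]
This is exactly the classical cone inequality augmented by the trace-gap term $\tfrac1\lambda(\trace X^* - \trace\tilde X)$, which is the one place the trace regularizer intrudes.

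Next I would run the standard tube-and-cone estimate. Partition $T_0^c$ into blocks $T_1, T_2, \dots$ of size $k$ in order of decreasing entry magnitude of $H$, and set $T_{01} = T_0 \cup T_1$. The decreasing-rearrangement estimate gives $\sum_{j\ge 2}\|H_{T_j}\|_2 \le k^{-1/2}\|H_{T_0^c}\|_1$, into which I substitute the cone bound together with $\|H_{T_0}\|_1 \le \sqrt k\,\|H_{T_0}\|_2$. Since $\B(H)=0$ forces $\B(H_{T_{01}}) = -\sum_{j\ge2}\B(H_{T_j})$, I combine the $(\epsilon,2k)$-RIP lower bound $(1-\epsilon)\|H_{T_{01}}\|_2^2 \le \|\B(H_{T_{01}})\|_2^2$ with the near-orthogonality estimate $|\langle \B(H_{T}), \B(H_{T'})\rangle| \le \epsilon\|H_T\|_2\|H_{T'}\|_2$ for disjoint $k$-sparse blocks, and use $\|H_{T_0}\|_2 + \|H_{T_1}\|_2 \le \sqrt2\,\|H_{T_{01}}\|_2$. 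This produces an inequality of the form $(1-\rho)\|H_{T_{01}}\|_2 \le \rho\, k^{-1/2}\big(2\|X^*-X_s\|_1 + \tfrac1\lambda(\trace X^* - \trace\tilde X)\big)$ with $\rho = \sqrt2\,\epsilon/(1-\epsilon)$. The hypothesis $\epsilon < 1/(1+\sqrt2) = \sqrt2 - 1$ is precisely what makes $\rho < 1$, so this can be solved for $\|H_{T_{01}}\|_2$.

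Finally I would assemble the global bound from $\|H\|_2 \le \|H_{T_{01}}\|_2 + \|H_{T_{01}^c}\|_2$, controlling the tail $\|H_{T_{01}^c}\|_2 \le \sum_{j\ge2}\|H_{T_j}\|_2$ by the same cone/rearrangement estimate and inserting the bound on $\|H_{T_{01}}\|_2$ from the previous step. Collecting the $\|X^*-X_s\|_1$ and $(\trace X^* - \trace\tilde X)$ contributions and rewriting the constants in terms of $\rho$ gives the stated inequality. I expect the main obstacle to be not any single hard estimate but the disciplined tracking of the extra trace-gap term through the cone inequality and the final recombination, so that it emerges with the advertised coefficient $2(1-\rho)^{-1} + k^{-1/2}$; once $X$ is regarded as a vector, the RIP and near-orthogonality core is identical to the scalar compressed-sensing proof.
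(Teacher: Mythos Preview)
Your proposal is correct and follows essentially the same route as the paper's proof: both transplant the RIP-based stable-recovery argument of \citep{Candes_2008} to the lifted matrix variable, deriving the cone inequality from optimality of $\tilde X$ (with the extra $\tfrac{1}{\lambda}(\trace X^*-\trace\tilde X)$ term), bounding the tail by the decreasing-rearrangement estimate, controlling $\|H_{T_{0,1}}\|_2$ via RIP/near-orthogonality, and then combining. The only caveat is that the specific intermediate inequality you wrote for $(1-\rho)\|H_{T_{01}}\|_2$ carries a $k^{-1/2}$ on the trace-gap term, whereas the paper's corresponding step has $\|\Delta_{T_{0,1}}\|_2\le \tfrac{\rho}{1-\rho}k^{-1/2}\|X^*-X_s\|_1+\tfrac{1}{\lambda(1-\rho)}(\trace X^*-\trace\tilde X)$; you will need that form (or an equivalent regrouping) to land exactly on the advertised coefficient $2(1-\rho)^{-1}+k^{-1/2}$, which is precisely the bookkeeping you flagged as the delicate part.
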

\begin{proof}[Proof of Theorem \ref{thm:CPR}]
The proof is inspired by the work on compressed sensing presented in
\cite{Candes_2008}.

First, we introduce $\Delta  = \tilde X-X^*$. For a matrix $X$ and an index set $T$, we use the notation $X_{T}$ to mean the matrix with all zeros
except those indexed by $T$, which are set to the corresponding
values of $X$.  Then let $T_0$ be the index set of the $k$ largest elements of $X^*$ in absolute value,
and $T_0^c=\{(1,1), (1,2),\dots,(n,n) \} \setminus T_0$ be its complement.
Let $T_1$ be the index set associated
with the $k$ largest elements in absolute value of $\Delta
_{T_0^c}$ and $T_{0,1}\doteq T_0\cup T_1$ be the union. Let $T_2$ be the index set associated
with the $k$ largest elements in absolute value of $\Delta
_{T_{0,1}^c}$, and so on.

Notice that
\begin{equation}
\|\Delta \|_2 = \| \Delta_{T_{0,1}} + \Delta_{T_{0,1}^c} \|_2 \leq \|
\Delta_{T_{0,1}} \|_2+  \| \Delta_{T_{0,1}^c} \|_2.
\end{equation}
We will now study each of the two terms on the right hand side
separately.

We first consider  $\| \Delta_{T_{0,1}^c} \|_2$. For $j>1$ we have
that for each $i\in T_j$ and $i' \in T_{j-1}$ that $|\Delta[i]|\leq
|\Delta [i']|$. Hence $\|\Delta_{T_{j}}\|_{\infty} \leq
\|\Delta_{T_{j-1}}\|_1/k$. Therefore,
\begin{equation}
\| \Delta_{T_j}\|_2 \leq k^{1/2}\|\Delta_{T_{j}}\|_{\infty}  \leq k^{-1/2} \|\Delta_{T_{j-1}}\|_1 
\end{equation}
and
\begin{equation}
\| \Delta_{T_{0,1}^c} \|_2 \leq \sum_{j\ge 2} \| \Delta_{T_{j}} \|_2 \leq
k^{-1/2} \| \Delta_{T_0^c}\|_1. 
\end{equation}
Now, since $\tilde X$ minimizes $ \trace X +\lambda \|X\|_1$, we have
\begin{equation}
\begin{array}{rcl}
 \trace X^* +\lambda \|X^*\|_1 &\geq&  \trace \tilde X +\lambda \| \tilde X\|_1\\
 &\geq&  \trace \tilde X +\lambda ( \|X^*_{T_0}\|_1 -\|\Delta_{T_0} \|_1
 \\ &+& \|
 \Delta_{T_0^c} \|_1 -\|X^*_{T_0^c}\|_1 ).
 \end{array}
\end{equation}
Hence,
\begin{equation}
 \| \Delta_{T_0^c} \|_1 \leq  \frac{-1}{\lambda} \trace \Delta -\|X^*_{T_0}\|_1 +\|\Delta_{T_0} \|_1   +\|X^*_{T_0^c}\|_1+\|X^*\|_1.
\end{equation}
Using the fact $\|X^*_{T_0^c}\|_1 = \|X^*-X_s\|_1=\|X^*\|_1 - \|X^*_{T_0}\|_1$, we
get a bound for $ \| \Delta_{T_0^c} \|_1$:
\begin{equation}
 \| \Delta_{T_0^c} \|_1 \leq  \frac{-1}{\lambda} \trace \Delta +\|\Delta_{T_0} \|_1   +2 \|X^*_{T_0^c}\|_1. 
\end{equation}
Subsequently, the bound for $\|\Delta_{T_{0,1}^c}\|_2$ is given by
\begin{align}
 \| \Delta_{T_{0,1}^c} \|_2 \leq  & k^{-1/2}( \frac{-1}{\lambda} \trace
 \Delta  +\|\Delta_{T_0} \|_1   +2 \|X^*_{T_0^c}\|_1 )  \\ \leq&  k^{-1/2}( \frac{-1}{\lambda} \trace\Delta   +2 \|X^*_{T_0^c}\|_1 ) +\|\Delta_{T_0} \|_2.
\end{align}

Next, we consider  $\| \Delta_{T_{0,1}} \|_2$. It can be shown by a
similar derivation as in \cite{Candes_2008} that
\begin{equation}
\| \Delta_{T_{0,1}} \|_2 \leq \frac{\rho}{1-\rho} k^{-1/2}
\|X^*-X_s\|_1-\frac{1}{\lambda}\frac{1}{1-\rho}\trace \Delta.
\end{equation}

Lastly, combine the bounds for $\| \Delta_{T_{0,1}^c} \|_2 $ and
$\| \Delta_{T_{0,1}} \|_2 $, and we get the final result:
\begin{align}
\|\Delta\|_2 \leq & \| \Delta_{T_{0,1}} \|_2 +  \| \Delta_{T_{0,1}^c}
\|_2 \\ \leq & -k^{-1/2} \frac{1}{\lambda} \trace \Delta   +2  k^{-1/2} \|X^*_{T_0^c}\|_1  +2\| \Delta_{T_{0,1}} \|_2
\\ \leq& - \left (2 \left (1-\rho \right)^{-1}+k^{-1/2} \right)
\frac{1}{\lambda}  \trace \Delta  \\ &   +2(1-\rho)^{-1} k^{-1/2} \|X^*-X_s\|_1.
\end{align}
\end{proof}

The bound given in Theorem~\ref{thm:CPR} is rather impractical since
it contains both $\|\tilde X - X^*\|_2$ and $\trace (\tilde X - X^*)$. The
weaker bound given in the following corollary does not have this problem:   
\begin{cor}[A Practical Bound on $\| \tilde X-X^*\|_2$]\label{lem:prac}
The bound on $\| \tilde X-X^*\|_2$ in Theorem \ref{thm:CPR}
can be relaxed to a weaker bound:
\begin{align}
\Big (1-\big (\frac{2 k^{1/2} }{1-\rho}+1 &\big) \frac{1}{\lambda} \Big)
 \| \tilde X-X^*\|_2  \\ \leq & \frac{2}{(1-\rho) \sqrt{k} } \|X^*
-X_s\|_1.
\end{align}
If $X^*$ is $k$-sparse, $\epsilon < \frac{1}{1+\sqrt{2}}$, and  $\B(\cdot)$ is an $(\epsilon,
2k)$-RIP linear operator, then we can guarantee that $\tilde X=X^*$ if
\begin{equation}
\lambda>\frac{2 k^{1/2} }{1-\rho}+1 
\end{equation}
and $\tilde X$ has rank 1.
\end{cor}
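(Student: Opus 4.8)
The plan is to obtain Corollary~\ref{lem:prac} as a purely algebraic consequence of Theorem~\ref{thm:CPR}, the only genuinely new ingredient being a single inequality that eliminates the trace-difference term. Writing $\Delta = \tilde X - X^*$, Theorem~\ref{thm:CPR} already supplies
\[
\|\Delta\|_2 \le \frac{2}{(1-\rho)\sqrt k}\,\|X^*-X_s\|_1 + \Big(\frac{2}{1-\rho}+\frac{1}{\sqrt k}\Big)\frac{1}{\lambda}\big(\trace X^*-\trace\tilde X\big).
\]
The awkward feature of this bound is that its right-hand side still contains $\trace X^*-\trace\tilde X$, i.e.\ the unknown $\tilde X$. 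The first and main step is therefore to establish the auxiliary estimate
\[
\trace X^*-\trace\tilde X \le \sqrt k\,\|\tilde X-X^*\|_2,
\]
after which everything reduces to bookkeeping.

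Granting this estimate, I would substitute it into the Theorem~\ref{thm:CPR} bound, replacing the trace term by $\sqrt k\,\|\Delta\|_2$, and then move the resulting $\|\Delta\|_2$ contribution to the left. Since $\big(\tfrac{2}{1-\rho}+\tfrac{1}{\sqrt k}\big)\sqrt k = \tfrac{2\sqrt k}{1-\rho}+1$, this collects exactly the coefficient $1-\big(\tfrac{2k^{1/2}}{1-\rho}+1\big)\tfrac1\lambda$ in front of $\|\tilde X-X^*\|_2$, yielding the stated weaker bound. For the recovery claim I would specialize to $k$-sparse $X^*$: then $X_s=X^*$, so $\|X^*-X_s\|_1=0$ and the inequality reads $\big(1-(\tfrac{2k^{1/2}}{1-\rho}+1)\tfrac1\lambda\big)\|\tilde X-X^*\|_2\le 0$. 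Choosing $\lambda>\tfrac{2k^{1/2}}{1-\rho}+1$ makes the bracketed coefficient strictly positive, which forces $\|\tilde X-X^*\|_2\le 0$ and hence $\tilde X=X^*$.

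The hard part is justifying the auxiliary estimate, and in particular the constant $\sqrt k$ together with the correct sign. Only the upper bound on $\trace X^*-\trace\tilde X$ is needed, so the case $\trace\tilde X\ge\trace X^*$ is immediate; the substance lies in the case where $\tilde X$ has strictly smaller trace, which is precisely the regime that the trace-minimizing objective encourages. Writing $\trace\Delta=\langle I,\Delta\rangle$ and applying Cauchy--Schwarz, the natural factor is $\sqrt{\rank\Delta}$ rather than $\sqrt n$. This is where the rank-$1$ hypothesis on $\tilde X$ in the recovery statement earns its place: with both $X^*$ and $\tilde X$ of rank $1$, $\Delta$ has rank at most $2$, so $|\trace\Delta|\le\sqrt2\,\|\Delta\|_2\le\sqrt k\,\|\Delta\|_2$ for $k\ge2$, which simultaneously pins down the constant and explains why the hypothesis appears. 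I expect the only delicate point to be making this restriction-to-a-low-dimensional-subspace argument clean enough to also cover the first (general) part of the statement, where $\tilde X$ is not a priori rank $1$.
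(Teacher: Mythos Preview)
Your overall plan---start from Theorem~\ref{thm:CPR}, bound the trace-difference term by $\sqrt{k}\,\|\Delta\|_2$, and rearrange---is exactly what the paper does. The difference lies in how the auxiliary estimate $\trace X^*-\trace\tilde X\le\sqrt{k}\,\|\tilde X-X^*\|_2$ is obtained. The paper does not go through the rank of $\Delta$; instead it uses the two-step chain
\[
\trace X^*-\trace\tilde X \;\le\; \|X^*-\tilde X\|_1 \;\le\; k^{1/2}\,\|X^*-\tilde X\|_2,
\]
the first inequality being trivial (diagonal sum bounded by entrywise $\ell_1$-norm) and the second a Cauchy--Schwarz step. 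In particular, the paper's derivation of the relaxed bound never invokes the rank-$1$ hypothesis, so the role you assign to that hypothesis (``this is where the rank-$1$ hypothesis earns its place'') does not match the paper's argument. Your rank-based route is arguably more honest about where the constant $\sqrt{k}$ comes from, and you are right that it only works cleanly once both matrices are rank $1$; conversely, the paper's second step $\|\cdot\|_1\le\sqrt{k}\,\|\cdot\|_2$ tacitly requires $X^*-\tilde X$ to be $k$-sparse, which is not justified either. So both derivations leave essentially the same gap for the general first part of the statement---you simply locate it in a different place than the paper does.
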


%%%%%%%%%%%%%%%%%%%%%%%%%%%%%%%%%%%%%%%
\begin{proof}[Proof of Corollary \ref{lem:prac}]
It follows from the assumptions of Theorem \ref{thm:CPR} that
\begin{equation}
1-\rho=1- \frac{\sqrt{2} \epsilon}{1-\epsilon} \geq 1- \frac{\sqrt{2}
  \frac{1}{1+\sqrt{2}}}{1- \frac{1}{1+\sqrt{2}}} =0.
\end{equation} 
Hence,
\begin{equation}
\left (2\left (1-\rho \right )^{-1}+k^{-1/2} \right) \frac{1}{\lambda} \geq 0.
\end{equation}
Therefore, we have
\begin{align}
\| \tilde X-X^*\|_2\leq & \frac{2}{(1-\rho) \sqrt{k} } \|X^*
-X_s\|_1 \\+ &\left (2 \left (1-\rho \right )^{-1}+k^{-1/2} \right ) \frac{1}{\lambda} (\trace X^* -\trace \tilde
 X) \\ \leq & \frac{2}{ \left (1-\rho \right ) \sqrt{k} } \|X^*
-X_s\|_1 \\+ & \left (2 \left (1-\rho \right )^{-1}+k^{-1/2} \right ) \frac{1}{\lambda} \|  X^* - \tilde
 X\|_1 \\ \leq & \frac{2}{(1-\rho) \sqrt{k} } \|X^*
-X_s\|_1 \\ + & \left (2 \left (1-\rho \right)^{-1}+k^{-1/2} \right ) \frac{k^{1/2}}{\lambda} \|  X^* -\tilde
 X\|_2
\end{align}
which is equal to the proposed condition after a rearrangement of the terms. 
\end{proof}

Given the above analysis, however, it may be the case that the linear operator $\B(\cdot)$ does not satisfy the RIP property
defined in Definition \ref{def:RIP}, as pointed out in \cite{Candes:11}.
Therefore, next we turn our attention to RIP-1 linear operators.
%\subsection{Analysis Using RIP-1 Assumptions}
%%%%%%%%%%%%%%%%%%%%%%%%%%%%%%%%%%%%%%%%%%%%%%%%%%%%%%%%%%%%%%%%%
%%%%%%%%%%%%%%%%%%%%%%%%%%%%%%%%%%%%%%%%%%%%%%%%%%%%%%%%%%%%%%%%%
%%%%%%%%%%%%%%%%%%%%%%%%%%%%%%%%%%%%%%%%%%%%%%%%%%%%%%%%%%%%%%%%%

\subsection{Analysis Using RIP-1}
We define RIP-1 as follows: 
\begin{df}[RIP-1]\label{def:RIP1}
A linear operator $\B(\cdot)$  is $(\epsilon,
k)$-RIP-1 if for all matrices $X \neq 0$ subject to $\|X\|_0 \leq k$,
we have 
\begin{equation}\label{eq:RIP1}
\left | \frac{\| \B(X)\|_1}{\| X\|_1} -1\right|<\epsilon.
\end{equation}
\end{df}

%%%%%%%%%%%%%%%%%%%%%%%%%%%%%%%%%%%%%%%%%%%%%%%%%%%%%%%%%%%%%%%%%
%%%%%%%%%%%%%%%%%%%%%%%%%%%%%%%%%%%%%%%%%%%%%%%%%%%%%%%%%%%%%%%%%
%%%%%%%%%%%%%%%%%%%%%%%%%%%%%%%%%%%%%%%%%%%%%%%%%%%%%%%%%%%%%%%%%

Theorems \ref{thm:rec}--\ref{thm:Phaseliftrel1} and
Corollary~\ref{thm:guartee1}  all hold
with RIP replaced by
RIP-1. The proofs follow those of the previous section with minor
modifications (basically replace the 2-norm with the
$\ell_1$-norm). The RIP-1 counterparts of Theorems \ref{thm:rec}--\ref{thm:Phaseliftrel1}  and
Corollary~\ref{thm:guartee1} are not restated in details
here. Instead we summarize the most  important property in the following theorem:  

%%%%%%%%%%%%%%%%%%%%%%%%%%%%%%%%%%%%%%%%%%%%%%%%%%%%%%%%%%%%%%%%%
%%%%%%%%%%%%%%%%%%%%%%%%%%%%%%%%%%%%%%%%%%%%%%%%%%%%%%%%%%%%%%%%%
%%%%%%%%%%%%%%%%%%%%%%%%%%%%%%%%%%%%%%%%%%%%%%%%%%%%%%%%%%%%%%%%%

\begin{thm}[Upper Bound \& Recoverability Through
  $\ell_1$]\label{thm:bound}

Let $\bar \xx$ be the sparsest solution to \eqref{eq:pr}. 
 The solution of CPRL  \eqref{eq:noiseless-SDP}, $\tilde X$, is equal to  $\bar
\xx \bar \xx^H$ if  it
has rank 1 and
$B(\cdot) $ is ($\epsilon, 2\|\tilde X\|_0$)-RIP-1 with $\epsilon<1$.

% Let  $\B(\cdot)$ be a  $(\epsilon, k)$-RIP-1, $\epsilon <1$, linear
% operator. Let $ \bar \xx \bar \xx^H$  be the sparsest solution to \eqref{eq:pr},
% let $\tilde X$ be the solution to \eqref{eq:noiseless-SDP}.
% and assume that it has rank 1 and is $\bar k$-sparse. Then 
% \begin{enumerate}
% \item $\|X^*\|_0 \leq \|\tilde X\|_0$;
% \item if  $k\geq 2 \bar k$, then $X^* =\tilde X$.
% \end{enumerate}

\end{thm}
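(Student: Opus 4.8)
The plan is to treat this statement as the RIP-1 counterpart of Corollary~\ref{thm:guartee1} and to retrace the argument of Theorem~\ref{thm:rec}, replacing the Frobenius-norm RIP bound by the $\ell_1$-norm RIP-1 bound of Definition~\ref{def:RIP1}. The whole claim reduces to a single contradiction argument showing that a rank-1 feasible point must coincide with $\bar\xx\bar\xx^H$, so I would not re-derive the intermediate RIP-1 analogues of Theorems~\ref{thm:rec}--\ref{thm:Phaseliftrel1} in detail but fold them into one short argument.

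Concretely, I would suppose for contradiction that $\tilde X \neq \bar\xx\bar\xx^H$ and set $\Delta = \bar\xx\bar\xx^H - \tilde X \neq 0$. The first task is to pin down the support size of $\Delta$. Since $\tilde X$ is a rank-1, PSD CPRL solution reproducing the data, Theorem~\ref{thm:Phaseliftrel1}---whose proof invokes only that $\bar\xx$ is the sparsest feasible signal and uses no isometry hypothesis, hence applies verbatim here---gives $\|\bar\xx\bar\xx^H\|_0 \leq \|\tilde X\|_0$. Therefore $\|\Delta\|_0 \leq \|\bar\xx\bar\xx^H\|_0 + \|\tilde X\|_0 \leq 2\|\tilde X\|_0$, placing $\Delta$ exactly in the sparsity regime on which the $(\epsilon, 2\|\tilde X\|_0)$-RIP-1 hypothesis is active.

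Next I would exploit feasibility: both $\bar\xx\bar\xx^H$ and $\tilde X$ match the measurements, so $\B(\bar\xx\bar\xx^H) = \bb = \B(\tilde X)$, whence $\B(\Delta) = 0$ and $\|\B(\Delta)\|_1 = 0$. Applying the RIP-1 inequality \eqref{eq:RIP1} to the nonzero, $2\|\tilde X\|_0$-sparse matrix $\Delta$ then forces $\left|\frac{0}{\|\Delta\|_1} - 1\right| < \epsilon$, i.e. $1 < \epsilon$, contradicting $\epsilon < 1$. The contradiction establishes $\tilde X = \bar\xx\bar\xx^H$.

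I expect no serious obstacle, since the $\ell_1$-for-$\ell_2$ substitution leaves the logical skeleton of Theorem~\ref{thm:rec} intact. The only point needing care is the support count $\|\Delta\|_0 \leq 2\|\tilde X\|_0$: it rests on the rank-1 identity $\|\xx\xx^H\|_0 = \|\xx\|_0^2$, which promotes the sparsity comparison from the vector factors to their outer products, together with the correct direction of the inequality furnished by Theorem~\ref{thm:Phaseliftrel1}. Once that bound is secured, the RIP-1 hypothesis applies directly and the remainder of the argument is a transcription of the RIP proof.
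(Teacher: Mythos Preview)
Your proposal is correct and matches the paper's approach: the paper's own proof is a one-line remark that the argument follows Corollary~\ref{thm:guartee1} (hence Theorem~\ref{thm:rec}) verbatim with the $\ell_2$-norm replaced by the $\ell_1$-norm, and you have simply written out those details explicitly. The support bound $\|\Delta\|_0\le 2\|\tilde X\|_0$ via Theorem~\ref{thm:Phaseliftrel1} and the ensuing RIP-1 contradiction are exactly the intended steps.
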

\begin{proof}[Proof of Theorem~\ref{thm:bound}]
%The first property follows trivially from that $\tilde X$ satisfies all
%the constraints of \eqref{eq:CPR31}. 
The proof follows trivially for the proof Theorem \ref{thm:guartee1} (basically replace the 2-norm with the
$\ell_1$-norm). 

\end{proof}

\subsection{Analysis Using Mutual Coherence}
%%%%%%%%%%%%%%%%%%%%%%%%%%%%%

The RIP type of argument may be difficult to check for a given matrix and
are more useful for claiming results for classes of matrices/linear
operators. For instance, it has  been shown that random Gaussian matrices satisfy the RIP  with
high probability. However, given one sample of a random Gaussian matrix,
it is hard to check if it actually satisfies the RIP or not.  

Two alternative arguments are spark \cite{Chen:98} and mutual coherence \cite{Donoho:03b,CANDES:2009}.
The spark condition usually gives tighter bounds but is known to be difficult to
compute. On the other hand, mutual coherence may give less tight bounds,
but is more tractable. We will focus on mutual coherence here. 

Mutual coherence is defined as: 
\begin{df}[Mutual Coherence]
 For a matrix $A$, define  the \textit{mutual coherence}  as
\begin{equation}
\mu(A)=\max_{1\leq k,j \leq n, k\neq j} \frac{|\aa^H_k
  \aa_j|}{\|\aa_k\|_2\|\aa_j\|_2}.
\end{equation}
\end{df}

By an abuse of notation, let $\B$ be the matrix satisfying $\bb= \B X^s$ with $X^s$ being the vectorized version of $X$. We are now ready to state the following theorem:

\begin{thm}[Recovery Using Mutual Coherence]\label{thm:guartee2}
Let $\bar \xx$ be the sparsest solution to \eqref{eq:pr}. 
 The
solution of CPRL  \eqref{eq:noiseless-SDP}, $\tilde X$, is equal to  $\bar
\xx \bar \xx^H$ if  it
has rank 1 and
%\begin{equation} 
$\|\tilde X\|_0 < 0.5(1+1/\mu(\B )).$
%\end{equation} 
\end{thm}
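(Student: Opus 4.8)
The plan is to reduce the statement to the classical mutual-coherence uniqueness guarantee for the lifted linear system $\bb = \B X^s$, and then to reuse the contradiction structure from the proof of Theorem~\ref{thm:rec}. The essential ingredient I need is a lower bound on the sparsity of any nonzero null-space vector of $\B$ expressed through $\mu(\B)$; this bound plays exactly the role that the RIP inequality plays in Theorem~\ref{thm:rec}, so once it is in hand the argument should close in the same way.

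First I would establish the key lemma: every matrix $D\neq 0$ whose vectorization $D^s$ satisfies $\B D^s = 0$ obeys $\|D\|_0 \geq 1 + 1/\mu(\B)$. This is the standard strict-diagonal-dominance (Gershgorin) argument. Let $S$ be the support of $D^s$ with $|S| = s = \|D\|_0$, so that the columns of $\B$ indexed by $S$ are linearly dependent. After normalizing those columns, their Gram matrix $G$ has unit diagonal and off-diagonal entries bounded in magnitude by $\mu(\B)$. If $s < 1 + 1/\mu(\B)$, equivalently $(s-1)\,\mu(\B) < 1$, then each row of $G$ is strictly diagonally dominant, so $G$ is nonsingular and the corresponding columns of $\B$ are linearly independent, contradicting the dependence. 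Hence $\|D\|_0 \geq 1 + 1/\mu(\B)$. I expect this lemma to be the main obstacle, mostly in the normalization bookkeeping: one must check that the coherence of $\B$ itself (rather than of its column-normalized version) is what controls the off-diagonal entries of $G$, and that linear (in)dependence is preserved under column scaling.

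Next I would run the contradiction argument in parallel with Theorem~\ref{thm:rec}. Suppose $\tilde X \neq \bar \xx \bar \xx^H$. Since $\tilde X$ has rank $1$ and is feasible, it factors as $\tilde X = \xx \xx^H$ for some $\xx$ solving \eqref{eq:pr}; because $\bar \xx$ is the sparsest such solution we have $\|\bar \xx\|_0 \leq \|\xx\|_0$, and therefore $\|\bar \xx \bar \xx^H\|_0 \leq \|\tilde X\|_0$. Both $\tilde X$ and $\bar \xx \bar \xx^H$ satisfy the same linear measurement constraints, so $D \doteq \tilde X - \bar \xx \bar \xx^H$ is a nonzero element of the null space of $\B$ with $\|D\|_0 \leq \|\tilde X\|_0 + \|\bar \xx \bar \xx^H\|_0 \leq 2\|\tilde X\|_0$. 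Combining the hypothesis $\|\tilde X\|_0 < 0.5(1 + 1/\mu(\B))$ with the key lemma then forces $1 + 1/\mu(\B) \leq \|D\|_0 \leq 2\|\tilde X\|_0 < 1 + 1/\mu(\B)$, a contradiction. We conclude that $\tilde X = \bar \xx \bar \xx^H$.
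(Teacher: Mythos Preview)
Your proposal is correct and follows essentially the same approach as the paper's proof. The paper simply invokes the Donoho--Elad mutual-coherence uniqueness theorem \cite[Thm.~1]{Donoho:03b} for the lifted linear system $\bb = \B X^s$ rather than reproving the spark lower bound via the Gershgorin argument, and then concludes that since both $\tilde X$ and $\bar\xx\bar\xx^H$ satisfy the same linear constraints with sparsity below the threshold they must coincide; your argument is the same one with that cited lemma unpacked and the role of Theorem~\ref{thm:Phaseliftrel1} (the inequality $\|\bar\xx\bar\xx^H\|_0 \le \|\tilde X\|_0$) made explicit.
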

\begin{proof}[Proof of Theorem~\ref{thm:guartee2}]
Since $\bar \xx \bar \xx^H$ satisfies $\bb=B(\bar
\xx \bar \xx^H)$, it follows from \cite[Thm.~1]{Donoho:03b}
that 
\begin{equation}\label{eq:conduniquenessco}
\| \bar
\xx \bar \xx^H\|_0 < \frac{1}{2} \left ( 1 + \frac{1}{\mu(\B )}\right)
\end{equation}
is a sufficient condition for $\bar
\xx \bar \xx^H$ to be a unique solution. It further follows that if $\tilde X$ also satisfies
\eqref{eq:conduniquenessco} then we must have that  $\tilde X=\bar
\xx \bar \xx^H$ since $\tilde X$ also satisfies  $\bb=B(\tilde X)$.
\end{proof}

\section{Experiment}
\label{sec:experiment}
This section gives a number of comparisons with the other state-of-the-art methods in compressive phase retrieval. Code for the numerical illustrations
can be downloaded from \url{http://www.rt.isy.liu.se/~ohlsson/code.html}.

\subsection{Simulation}
First, we repeat the simulation given in Example~\ref{ex:ex1} for
$k=1,\dots,5$. For each $k$, $n=64$ is fixed, and we increase the measurement dimension $N$ until CPRL recovered the true sparse support in at least 95 out of 100 trials, i.e., 95\% success rate.
%\footnote{To decide
%whether or not the true sparsity pattern had been recovered, the following test is carried out:
%\begin{equation}\label{eq:test}
%\{i:|\xx_{\text{CPR},i})|<0.1\} \bigcap\{i:|\xx_i|>0.1 \} = \O
%\end{equation}
% $\xx_{\text{CPR},i}$ denotes the $i$ element of the CPR
%estimate 
%$\xx_{\text{CPR}}$, and $\xx_i$ denotes the $i$ element of 
%$\xx$.}
New data ($\xx$, $\bb$, and $R$) are generated in each trial. The curve of 95\% success rate is shown in Figure~\ref{fig:nVSk}.
\begin{figure}[th!]
\centering
\includegraphics[width = 0.8\columnwidth]{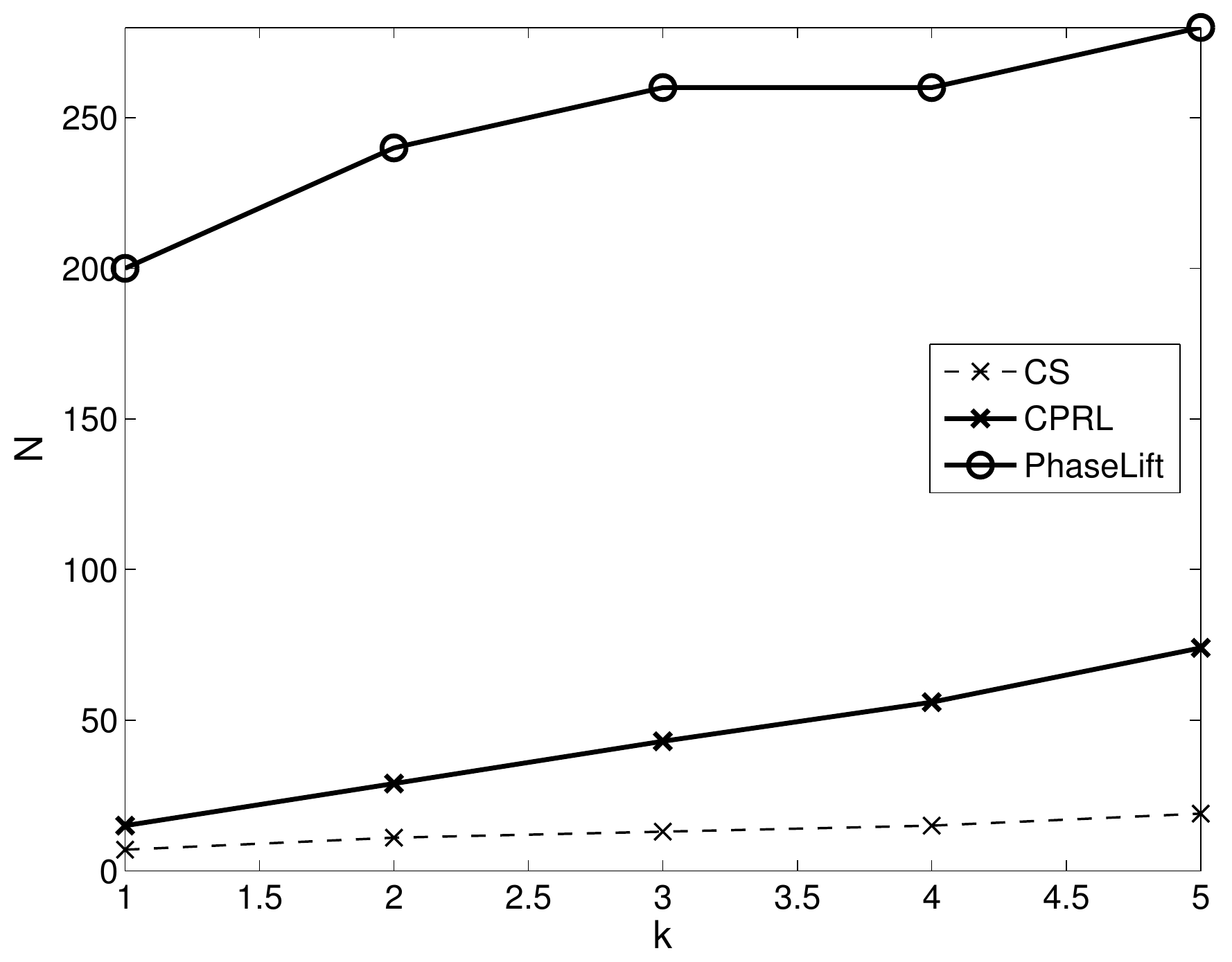}
\caption{The curves of 95\% success rates for CPRL, PhaseLift, and CS.
Note that in the CS scenario, the simulation is given the complete output $\yy$ instead of its squared magnitudes.}
\label{fig:nVSk}
\end{figure}

With the same simulation setup, we compare the accuracy of CPRL with the PhaseLift approach and the CS approach in Figure~\ref{fig:nVSk}.
First, note that CS is not applicable to phase retrieval problems in practice, since it assumes the phase of the observation is also given.
Nevertheless, the simulation shows CPRL via the SDP solution only requires a slightly higher sampling rate to achieve the same success rate
as CS, even when the phase of the output is missing. Second, similar to the discussion in Example \ref{ex:ex1}, without enforcing the sparsity constraint in \eqref{eq:phase-lift},
PhaseLift would fail to recover correct sparse signals in the low sampling rate regime.

It is also interesting to see the performance as $n$ and $N$ vary and $k$ held fixed. We therefore use the same setup as
in Figure~\ref{fig:nVSk} but now  fixed $k=2$ and for $n=10,\dots,60,$
gradually increased $N$ until CPRL recovered the true sparsity pattern
with 95\% success rate. The same procedure is repeated to
evaluate PhaseLift and CS. The results are shown in Figure \ref{fig:n-vs-N}.

\begin{figure}[th!]
\centering
\includegraphics[width = 0.8\columnwidth]{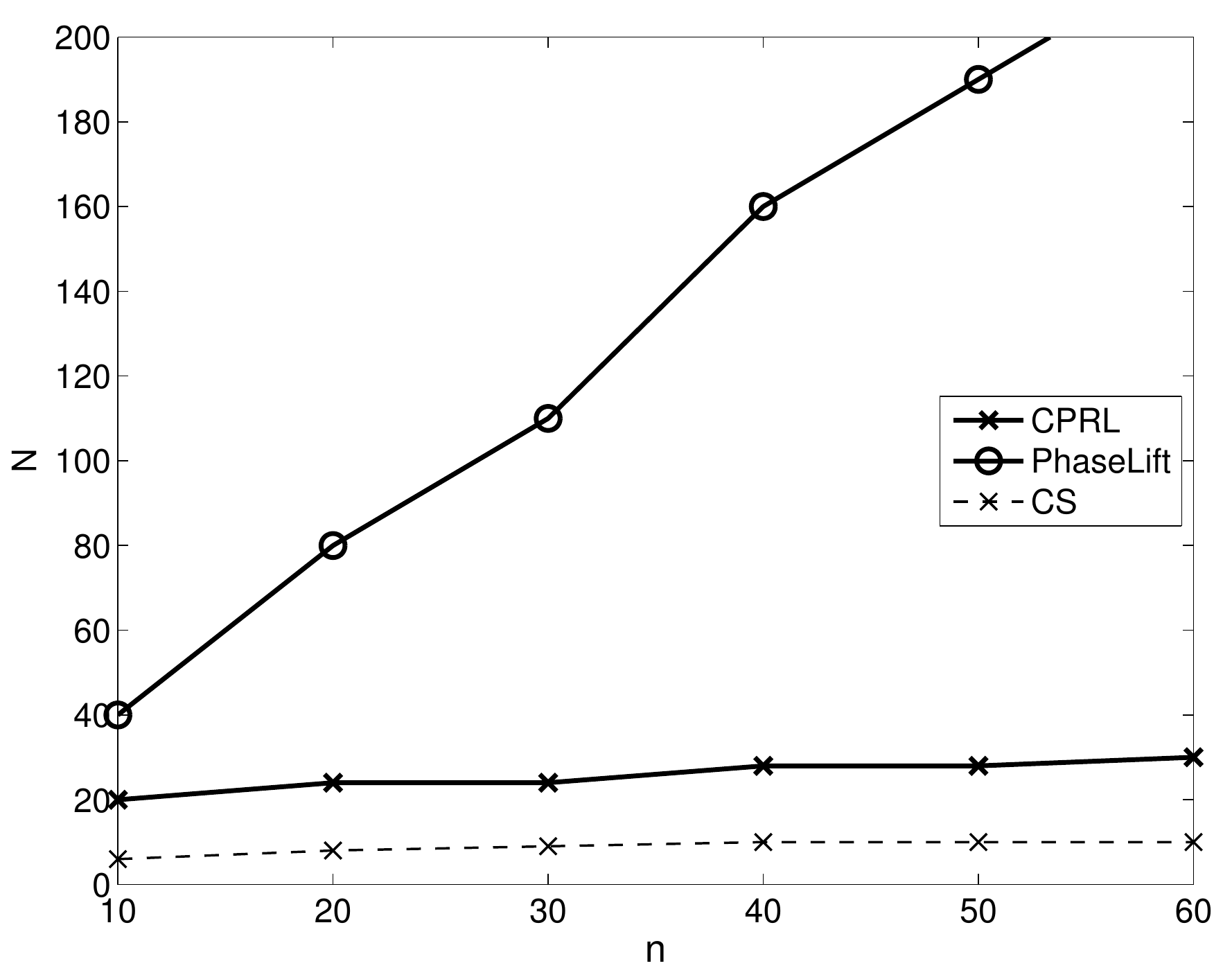}
\caption{The curves of 95\% success rate for CPRL, PhaseLift, and CS.
Note that the CS simulation is given the complete output $\yy$ instead of its squared magnitudes.} \label{fig:n-vs-N}
\end{figure}

Compared to Figure \ref{fig:nVSk}, we can see that the degradation from CS to CPRL when the phase information is omitted is largely affected by the sparsity of the signal. More specifically, when the sparsity $k$ is fixed, even when the dimension $n$ of the signal increases dramatically, the number of squared observations to achieve accurate recovery does not increase significantly for both CS and CPRL.

Next, we calculate the quantity $\frac{1}{2} \left ( 1 +  \frac{1}{\mu(\B)}\right)$, as Theorem \ref{thm:guartee2} shows that when
\begin{equation}
\|\bar X\|_0 < \frac{1}{2} \left ( 1 + \frac{1}{\mu(\B)}\right)
\end{equation}
and $\bar X$ has rank 1, then $X^* = \bar X$. The quantity is plotted for a number
of different $N$ and $n$'s in Figure \ref{fig:coher}. From the plot it
can be concluded that if the solution $\bar X$ has rank 1 and
only a single nonzero component for a choice of $2000 \geq n \geq
10,\, 45 \geq  N\geq 5$, Theorem \ref{thm:guartee2} can guarantee that $\bar X=X^*$.
We also observer that Theorem \ref{thm:guartee2} is pretty conservative,
since from Figure~\ref{fig:n-vs-N}
we have that with high probability we needed $N > 25$ to guarantee that
a two sparse vector is recovered correctly for $n=20$.

\begin{figure}[th!]
\centering
\includegraphics[width = 0.8\columnwidth]{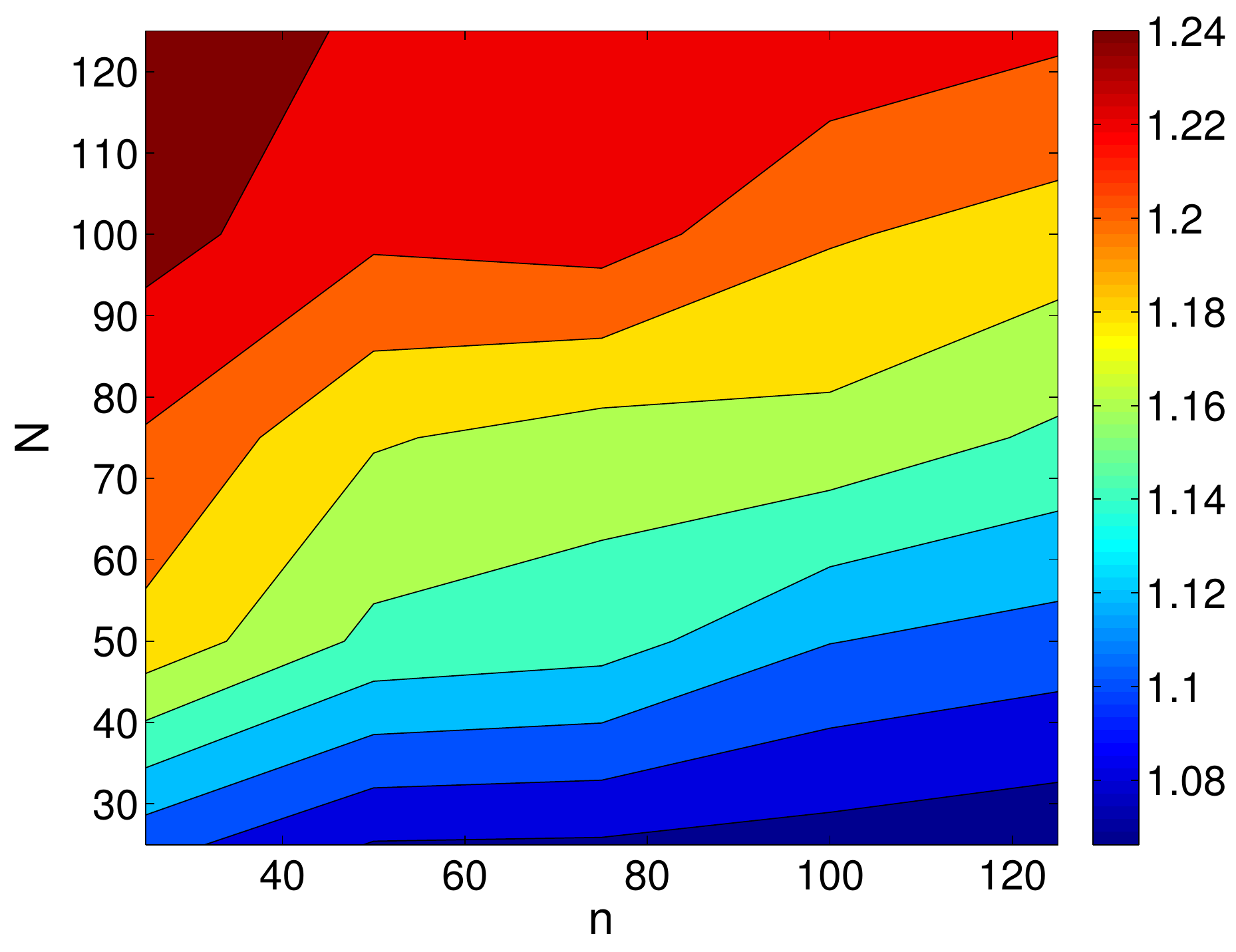}
\caption{A contour plot of the quantity $\frac{1}{2} \left ( 1 +
  \frac{1}{\mu(\B)}\right)$. $\mu$ is taken as the average over
10 realizations of $B$.} \label{fig:coher}
\end{figure}

\subsection{Audio Signals}

In this section, we further demonstrate the performance of CPRL using signals from a real-world audio recording. The timbre of a particular note on an instrument is determined by the fundamental frequency, and several overtones. In a Fourier basis, such a signal is sparse, being the summation of a few sine waves. Using the recording of a single note on an instrument will give us a naturally sparse signal, as opposed to synthesized sparse signals in the previous sections. Also, this experiment will let us analyze how robust our algorithm is in practical situations, where effects like room ambience might color our otherwise exactly sparse signal with noise.

Our recording $\zz\in\Re^s$ is a real signal, which is assumed to be sparse in a Fourier basis. That is, for some sparse $\xx \in \Ce^n$, we have $\zz = F_{inv} \xx$, where $F_{inv} \in \Ce^{s \times n}$ is a matrix representing a transform from Fourier coefficients into the time domain. Then, we have a randomly generated mixing matrix with normalized rows, $R \in \Re^{N \times s}$, with which our measurements are sampled in the time domain: 
\begin{equation}
\yy = R \zz = RF_{inv} \xx.
\label{eq:audio-sampling}
\end{equation}
Finally, we are only given the magnitudes of our measurements, such that $\bb = |\yy|^2 = |R \zz|^2$. 

For our experiment, we choose a signal with $s = 32$ samples, $N = 30$ measurements, and it is represented with $n = 2s$ (overcomplete) Fourier coefficients. Also, to generate $F_{inv}$, the $\Ce^{n \times n}$ matrix representing the Fourier transform is generated, and $s$ rows from this matrix are randomly chosen.

The experiment uses part of an audio file recording the sound of a tenor saxophone. The signal is cropped so that the signal only consists of a single sustained note, without silence. Using CPRL to recover the original audio signal given $\bb$, $R$, and $F_{inv}$, the algorithm gives us a sparse estimate $\xx$, which allows us to calculate $\zz_{est} = F_{inv} \xx$. We observe that all the elements of $\zz_{est}$ have phases that are $\pi$ apart, allowing for one global rotation to make $\zz_{est}$ purely real. This matches our previous statements that CPRL will allow us to retrieve the signal up to a global phase.

We also find that the algorithm is able to achieve results that
capture the trend of the signal using less than $s$ measurements. In
order to fully exploit the benefits of CPRL that allow us to achieve
more precise estimates with smaller errors using fewer measurements
relative to $s$, the problem should be formulated in a much higher
ambient dimension. However, using the CVX Matlab toolbox by
\cite{CVX1}, we already ran into computational and memory limitations
with the current implementation of the CPRL algorithm. These results
highlight the need for a more efficient numerical implementation of
CPRL as an SDP problem.

\begin{figure}[th!]
\centering
\includegraphics[width = 0.8\columnwidth]{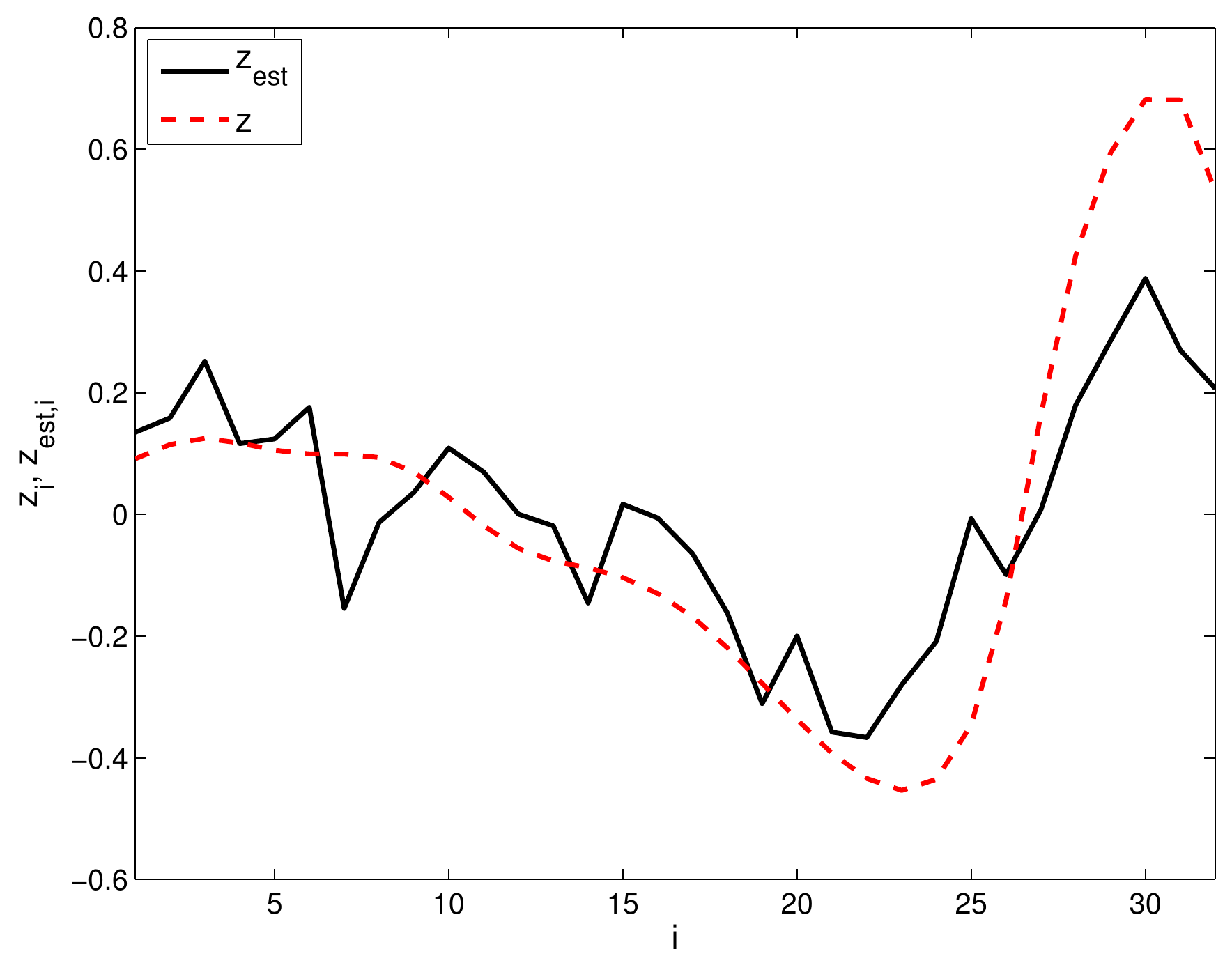}
\caption{The retrieved signal $\zz_{est}$ using CPRL versus the original audio signal $\zz$.}
\label{fig:experiment_figure_1}
\end{figure}

\begin{figure}[th!]
\centering
\includegraphics[width = 0.8\columnwidth]{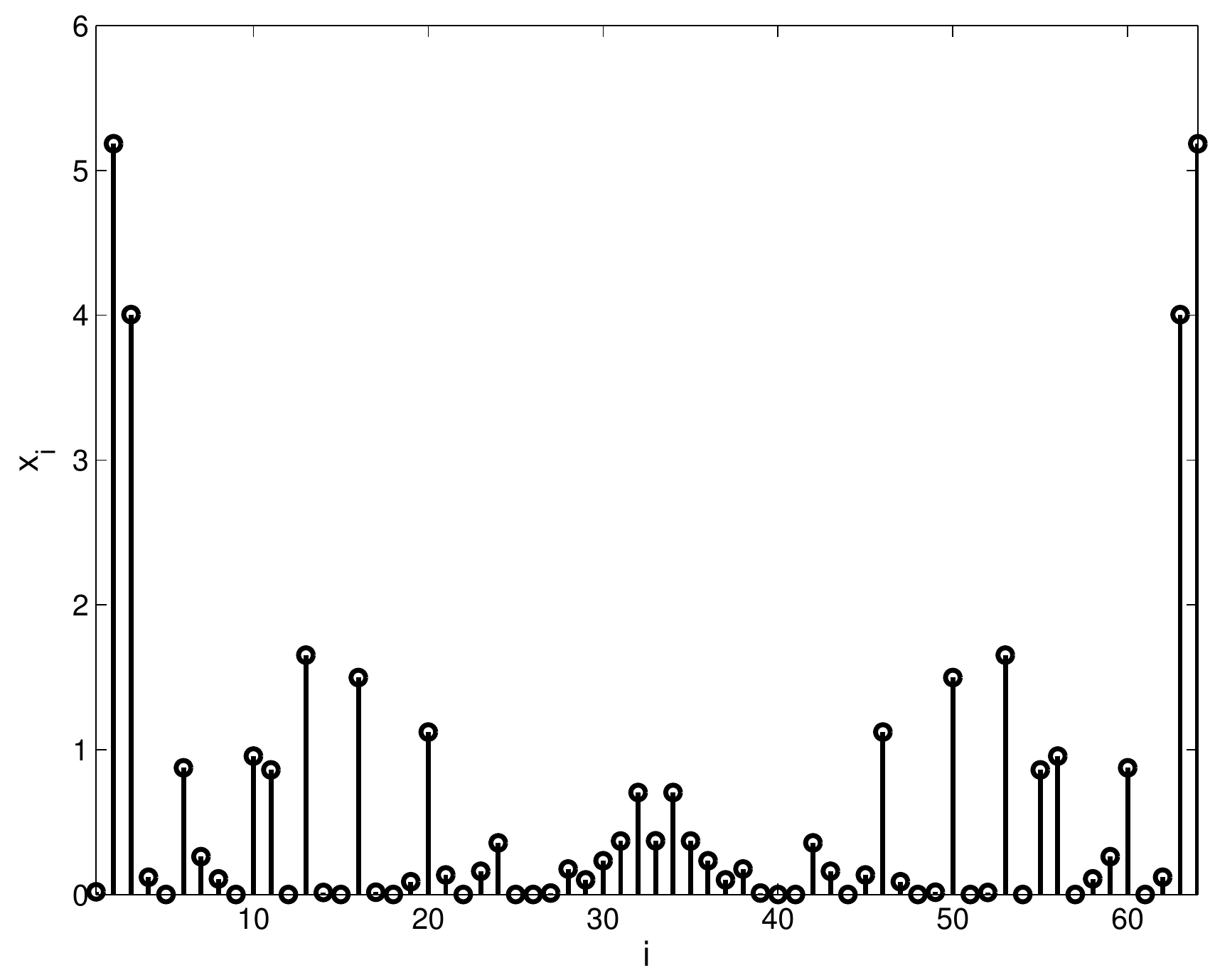}
\caption{The magnitude of $\xx$ retrieved using CPRL. The audio signal $\zz_{est}$ is obtained by $\zz_{est} = F_{inv} \xx$.}
\label{fig:experiment_figure_2}
\end{figure}

\section{Conclusion and Discussion}
A novel method for the compressive phase retrieval problem has been presented. The method
takes the form of an SDP problem and provides the
means to use compressive sensing in applications where only squared
magnitude measurements are available. The convex formulation gives it
an edge over previous presented approaches and numerical illustrations
show state of the art performance.

One of the future directions is improving the speed of the standard SDP solver, i.e., interior-point methods, currently used for the CPRL algorithm. The authors have previously introduced efficient numerical acceleration techniques for $\ell_1$-min \citep{YangA2010-ICIP} and Sparse PCA \citep{NaikalN2011-ICCV} problems. We believe similar techniques also apply to CPR. Such accelerated CPR solvers would facilitate exploring a broad range of high-dimensional CPR applications in optics, medical imaging, and computer vision, just to name a few.

\section{Acknowledgement}
Ohlsson is partially supported by the Swedish foundation for strategic
 research in the center MOVIII, the Swedish Research Council in
 the Linnaeus center CADICS, the European Research Council under the advanced
 grant LEARN, contract 267381, and a postdoctoral grant from the
 Sweden-America Foundation, donated by ASEA's Fellowship Fund.
 Sastry and Yang are partially supported by an ARO MURI grant W911NF-06-1-0076.
 Dong is supported by the NSF Graduate Research Fellowship under grant DGE 1106400, and by the Team for Research in Ubiquitous Secure Technology (TRUST), which receives support from NSF (award number CCF-0424422).

\bibliography{refHO}
\end{document}